\pdfoutput=1
\documentclass[11pt]{article}

\usepackage[utf8]{inputenc}
\usepackage[T1]{fontenc}
\usepackage[a4paper,margin=1in]{geometry}
\usepackage{amsmath,amssymb,amsthm,mathtools,mathrsfs}
\usepackage{microtype}
\usepackage[colorlinks=true,linkcolor=blue,citecolor=blue,urlcolor=blue]{hyperref}
\hypersetup{
  pdftitle={High-index asymptotics of Mathieu spectral gaps and splittings},
  pdfauthor={Berkay Anahtarci},
  pdfkeywords={Mathieu operator; Hill operator; spectral gap; non-self-adjoint operator; high-index asymptotics; Lyapunov--Schmidt reduction; Dyck path}
}

\newtheorem{theorem}{Theorem}[section]
\newtheorem{proposition}[theorem]{Proposition}
\newtheorem{lemma}[theorem]{Lemma}
\newtheorem{corollary}[theorem]{Corollary}
\theoremstyle{definition}
\newtheorem{definition}[theorem]{Definition}
\theoremstyle{remark}
\newtheorem{remark}[theorem]{Remark}

\newcommand{\C}{\mathbb{C}}

\title{High-index asymptotics of Mathieu spectral gaps and splittings}
\author{Berkay Anahtarci\\[2pt]
\small Department of Mathematical Engineering, \"{O}zye\u{g}in University,\\
\small 34794 \c{C}ekmek\"{o}y, \.{I}stanbul, T\"{u}rkiye\\
\small Corresponding author: \texttt{berkay.anahtarci@ozyegin.edu.tr}}
\date{}

\begin{document}

\maketitle

\begin{center}
\emph{Dedicated to the memory of Plamen B. Djakov.}
\end{center}

\begin{abstract}
Consider the Mathieu operator $L=-d^2/dx^2+2a\cos(2x)$ for fixed
$a\in\C\setminus\{0\}$. For large enough $n$, the periodic ($n$ even)
or anti-periodic ($n$ odd) problem on $[0,\pi]$ has exactly two algebraically simple
eigenvalues $\lambda_n^-,\lambda_n^+$ in the disc $|\lambda-n^2|<1$. We prove
that, as $n\to\infty$,
\[
\lambda_n^+-\lambda_n^-
=\pm\frac{8(a/4)^n}{[(n-1)!]^2}
\exp\!\left[-\frac{a^2(n+2)}{4(n^2-1)^2}-\frac{15a^4}{128n^7}+O(n^{-8})\right],
\]
where the sign depends on the labelling. For real $a$ with
$\lambda_n^-<\lambda_n^+$, this difference is the length of the $n$th spectral
gap. Expanding the exponential determines
the relative corrections of orders $n^{-4}$ to $n^{-7}$, beyond the correction
$-a^2/(4n^3)$ obtained by Anahtarci and Djakov (2012) with a relative
$O(n^{-4})$ remainder. By isospectrality,
the same formula holds for the potential $ce^{-2ix}+de^{2ix}$ with
$a^2=cd\ne0$.
\end{abstract}

\vspace{0.8em}
\noindent\textit{Keywords:} Mathieu operator; Hill operator; spectral gap;
non-self-adjoint operator; high-index asymptotics; Lyapunov--Schmidt reduction; Dyck path

\vspace{0.3em}
\noindent\textit{2020 MSC:} 34L20; 34B30; 34L40; 33E10

\section{Introduction}

Let $v\in L^2_{\mathrm{loc}}(\mathbb R)$ be a $\pi$-periodic potential with
mean zero. If $v$ is real, the Hill operator $-d^2/dx^2+v$ on $\mathbb R$ is
self-adjoint and its spectrum has a band--gap structure: there are points
$\lambda_0^+<\lambda_1^-\le\lambda_1^+<\lambda_2^-\le\lambda_2^+<\cdots$ such
that the spectrum is $\bigcup_{n\ge1}[\lambda_{n-1}^+,\lambda_n^-]$, and the
bands are separated by the spectral gaps $(\lambda_n^-,\lambda_n^+)$
\cite{MagnusWinkler1966,Eastham1974}. The gap edges $\lambda_n^\pm$ are the
eigenvalues of $-y''+vy=\lambda y$ on $[0,\pi]$ with periodic boundary
conditions for even $n$ and anti-periodic ones for odd $n$, and
$\gamma_n:=\lambda_n^+-\lambda_n^-$ is the length of the $n$th gap. The decay
of these lengths reflects the regularity of $v$: decay faster than every power
of $n^{-1}$ characterizes $C^\infty$, and exponential decay characterizes
analyticity \cite{Hochstadt1963,MarchenkoOstrovskii1975,Trubowitz1977}.
Grigis \cite{Grigis1987} determined the generic leading behaviour of
$\gamma_n$ for trigonometric-polynomial potentials, of which the Mathieu
potential is the simplest. If $v$ is not real-valued, the operator is not
self-adjoint and this band--gap description need not hold; still, for large
$n$, the periodic or anti-periodic problem has exactly two eigenvalues
$\lambda_n^-,\lambda_n^+$ in the disc $|\lambda-n^2|<1$, counted with algebraic multiplicity
\cite[Lemma~1]{AnahtarciDjakov2012}, and we keep the notation
$\gamma_n=\lambda_n^+-\lambda_n^-$.

We consider the Mathieu operator
\[
L=-\frac{d^2}{dx^2}+2a\cos(2x),\qquad a\in\C\setminus\{0\}\ \text{fixed},
\]
with the boundary conditions above, as $n\to\infty$. For real $a$, every gap
is open by Ince's theorem \cite{Ince1922}. For non-real $a$, we call
$\gamma_n$ the \emph{eigenvalue splitting}; exchanging
the labels changes its sign. Multiplication by $2a\cos(2x)$ shifts Fourier
indices by $\pm2$, so the resonant modes $e^{\pm inx}$ interact only through
chains of at least $n$ couplings. A Lyapunov--Schmidt reduction to their span
\cite{DjakovMityagin2006} turns the eigenvalue problem near $n^2$ into a
$2\times2$ problem whose diagonal entry is a sum over closed lattice walks
based at $n$ and whose off-diagonal entry is a sum over walks from $-n$ to $n$
(Section~\ref{sec:LS}). By Lemma~\ref{lem:center},
Proposition~\ref{prop:root} and \S\ref{subsec:completion}, both eigenvalues are displaced from $n^2$ by
$a^2/(2n^2)+a^2/(2n^4)+(16a^2+5a^4)/(32n^6)+O(n^{-8})$, refining
\cite[(3.24)]{AnahtarciDjakov2012}, whereas $\gamma_n$ is factorially small.
These common terms therefore do not determine the splitting, whose size is set
by the off-diagonal entry.

For real $a$, Harrell \cite{Harrell1981} obtained the size of $\gamma_n$ as
$n\to\infty$ up to a multiplicative constant, and Avron and Simon
\cite{AvronSimon1981} computed the constant, proving
$\gamma_n=8(|a|/4)^n[(n-1)!]^{-2}(1+o(n^{-2}))$; Hochstadt
\cite{Hochstadt1984} gave another proof of the leading asymptotics. Anahtarci and Djakov
\cite{AnahtarciDjakov2012} found the first relative correction for every
$a\in\C\setminus\{0\}$:
\begin{equation}\label{eq:ad}
\gamma_n
=\pm\frac{8(a/4)^n}{[(n-1)!]^2}
\left[1-\frac{a^2}{4n^3}+O(n^{-4})\right].
\end{equation}
For fixed $n$ and $a\to0$, Levy and Keller \cite{LevyKeller1963} found the
same leading factor, and the DLMF \cite{DLMF28.6} records small-coupling
expansions of the characteristic values. Ba\c{s}ar and Dunne
\cite{BasarDunne2015} organize the band edges and their splitting at fixed
level as series in powers of the coupling and recover the leading behaviour of
the gap at large level; rigorous large-coupling asymptotics are in
\cite{OgilvieOldeDaalhuis2015}. Our main result, Theorem~\ref{thm:main},
refines \eqref{eq:ad} for fixed $a$ as $n\to\infty$ by determining four
further relative corrections, of orders $n^{-4}$ through $n^{-7}$, with a
proved relative remainder $O(n^{-8})$.

\begin{theorem}[Spectral gap and splitting asymptotics]\label{thm:main}
Fix $a\in\C\setminus\{0\}$. There is an integer $n_0=n_0(a)$ such that, for
$n\ge n_0$, the operator $L$, with periodic boundary conditions for even $n$
and anti-periodic ones for odd $n$, has exactly two eigenvalues
$\lambda_n^-,\lambda_n^+$ in the disc $|\lambda-n^2|<1$; both are
algebraically simple, and
\begin{equation}\label{eq:main-refined}
\gamma_n=\lambda_n^+-\lambda_n^-
=\pm\frac{8(a/4)^n}{[(n-1)!]^2}
\exp\!\left[-\frac{a^2(n+2)}{4(n^2-1)^2}-\frac{15a^4}{128n^7}+O(n^{-8})\right].
\end{equation}
The implied constant depends only on $a$ and, for each $n$, the sign depends
on the labelling of the pair. If $a$ is real and $\lambda_n^-<\lambda_n^+$,
then \eqref{eq:main-refined} holds with the sign $+$ and with $(|a|/4)^n$ in
place of $(a/4)^n$; $\gamma_n$ is then the length of the $n$th spectral gap.
\end{theorem}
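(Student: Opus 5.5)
The plan is to follow the Lyapunov--Schmidt scheme of \cite{DjakovMityagin2006}, as already indicated in the introduction. Work in the Fourier basis $e^{ikx}$, $k\in n+2\mathbb Z$. Let $P$ be the projection onto $\mathrm{span}\{e^{\pm inx}\}$ and $Q=I-P$. For $|\lambda-n^2|<1$ and $n$ large, the operator $Q(\lambda-L)Q$ is invertible on $\mathrm{Ran}\,Q$, and the reduced $2\times2$ matrix is
\[
S(\lambda)=\begin{pmatrix}\lambda-n^2-\alpha(\lambda)&-\beta^-(\lambda)\\-\beta^+(\lambda)&\lambda-n^2-\alpha(\lambda)\end{pmatrix}.
\]
The two diagonal entries coincide by the symmetry $k\mapsto-k$. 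Here $\alpha$ is the sum over closed walks at $n$ with steps $\pm2$ that avoid $\pm n$ at intermediate times. Each step carries weight $a$, and each intermediate site $k$ carries weight $(\lambda-k^2)^{-1}$. Similarly, $\beta^\pm$ is the sum over walks from $\mp n$ to $\pm n$ with the same weights. By symmetry $\beta^+=\beta^-=:\beta$. Eigenvalues of $L$ in the disc are exactly the zeros of
\[
\det S=(\lambda-n^2-\alpha)^2-\beta^2 ,
\]
with matching algebraic multiplicities. A Rouché-type argument then gives exactly two eigenvalues, namely the roots $\lambda_n^\pm$ of $\lambda-n^2-\alpha(\lambda)=\pm\beta(\lambda)$. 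These are simple, because $\beta\neq0$ and $\partial_\lambda\alpha,\partial_\lambda\beta=O(n^{-2})$. I assume this is the content of Lemma~\ref{lem:center} and Proposition~\ref{prop:root}, which also place both roots at $n^2+a^2/(2n^2)+O(n^{-4})$.

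Subtracting the two root equations and using the mean value theorem along the segment joining $\lambda_n^-$ and $\lambda_n^+$ gives
\[
\gamma_n\bigl(1-\alpha'(\xi)\bigr)=\pm\bigl(\beta(\lambda_n^+)+\beta(\lambda_n^-)\bigr).
\]
Since $\alpha'=O(n^{-4})$ near $n^2$, and $\beta$ varies only by a relative factor $\gamma_n\,\beta'/\beta$, which is factorially small, this yields
\[
\gamma_n=\pm2\beta(\lambda^*)\bigl(1+O(n^{-4})\bigr).
\]
Here $\lambda^*$ is either root. The factor $1-\alpha'$ matters, however. I expect $\alpha'(\lambda)\approx -a^2/(2n^4)+\dots$, so it contributes at relative order $n^{-4}$ and must be tracked to $O(n^{-8})$. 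This requires $\alpha$ to order $n^{-8}$, which only involves walks of length at most $6$.

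The core task is then the asymptotic evaluation of $\beta(\lambda^*)$ to relative precision $O(n^{-8})$. The minimal walks, of length $n$, go monotonically $-n\to-n+2\to\dots\to n-2\to n$. Evaluated at $\lambda=n^2$, they give
\[
a^n\prod_{j=1}^{n-1}\bigl(n^2-(n-2j)^2\bigr)^{-1}=a^n\prod_{j=1}^{n-1}\frac{1}{4j(n-j)}=\frac{(a/4)^n\cdot 4}{[(n-1)!]^2},
\]
which is the leading term after doubling. The corrections come from two sources.

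The first source is replacing $n^2$ by $\lambda^*=n^2+\delta$, with $\delta=a^2/(2n^2)+\dots$. This multiplies the product by
\[
\exp\Bigl(-\sum_j\log\bigl(1+\delta/(4j(n-j))\bigr)\Bigr).
\]
The sum $\sum_j 1/(j(n-j))=2H_{n-1}/n$ contributes terms of relative size $\delta\log n/n$. These must cancel against the second source.

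The second source is the non-minimal walks of length $n+2m$ with $m$ backtracks. Each is a monotone path decorated with excursions, and their sum is a Dyck-path sum: closed excursions at site $k$ contribute a local self-energy $\Sigma_k(\lambda)$. So I would write
\[
\beta=a^n\prod_k\bigl(\lambda-k^2-\Sigma_k\bigr)^{-1}
\]
in an exact continued-fraction-like form, with ordering corrections handled by restricting excursions to one side. Expanding $\Sigma_k$ to order $a^4$ and summing over $k$ with Euler--Maclaurin-type evaluation of $\sum_j f(j,n)$ should produce the closed exponent $-a^2(n+2)/(4(n^2-1)^2)-15a^4/(128n^7)$. The factor $(n^2-1)^{-2}$ suggests that the $a^2$ part sums exactly in closed form via partial fractions.

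I expect this last step to be the main obstacle. It requires organising the excursion sum so that the $\log n$ terms cancel exactly, bounding the tail (excursions of depth at least $3$, and walks leaving $[-n,n]$) uniformly by $O(n^{-8})$ relative to the leading term, and carrying out the exact rational summations. For real $a$ with $\lambda_n^-<\lambda_n^+$, the sign $+$ and the modulus $|a|^n$ follow because $\beta$ is real and $\gamma_n>0$.
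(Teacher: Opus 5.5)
Your architecture matches the paper's. The reduction, the root equations $F=\pm\beta$, the splitting as $2\beta/F'$ at the centre up to a factorially small relative error, and $\beta$ as the monotone walk times a backstep factor are all as in the paper. Your one-sided continued-fraction product $\prod_k(d_k-\Sigma_k)^{-1}$ is exactly $1/\det(\mathcal D-a\mathcal A)$ by the three-term recursion for tridiagonal determinants, so it is equivalent to the paper's determinant identity. The trouble is in the quantitative part.

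First, $\alpha'$ is not $O(n^{-4})$. The two walks of length two give $\partial_\lambda\alpha=-a^2\bigl[(4n-4+z)^{-2}+(4n+4-z)^{-2}\bigr]+O(n^{-4})\approx-a^2/(8n^2)$ at $\lambda=n^2+z$. Unlike in $\alpha$ itself, the inward and outward walks add here instead of cancelling. Your $-a^2/(2n^4)$ looks like the derivative of $a^2/(2n^2)$ in $n^2$, not in $\lambda$. So $\gamma_n=\pm2\beta(\lambda^*)(1+O(n^{-4}))$ is false, and the slope is not a small correction. We have $\beta/\sigma_0=1+a^2Q_n+\cdots$ with $Q_n(0)\sim1/(8n^2)$, and this term is removed only by the $-a^2/(8n^2)$ in $\log(1/F')$. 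In fact, for $r=1,2,3$ the leading terms $\mp2/(r^2(16n^2)^r)$ of the $a^{2r}$ coefficients of $\log(1/F')$ and $\log(\beta/\sigma_0)$ cancel. The two factors therefore have to be expanded together.

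Second, your truncation fails. For $r=1,2,3$ the $a^{2r}$ coefficient of $\log(\beta/\sigma_0)$ has exact order $n^{-2r}$, dominated by the two ends of the path. Expanding $\Sigma_k$ to order $a^4$ and treating excursions of depth at least three as an $O(n^{-8})$ tail therefore leaves an error of order $n^{-6}$. The $a^6$ coefficient, from depth-three excursions together with repetitions of shallower ones, is $T_3(0)=1/(18432n^6)+5/(8192n^7)+O(n^{-8})$. The absence of an $a^6$ term in the final exponent is itself a cancellation. It is offset by $s_3=-1/(18432n^6)+O(n^{-8})$ from the inverse slope and by $\xi_1T_2'(0)=-5/(8192n^7)+O(n^{-8})$ from the shift of the evaluation point. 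So the $a^6$ level must be computed through $n^{-7}$. Orders $a^8$ and beyond need a bound uniform in $r$, such as the paper's $|T_r(z)|\le C^rn^{-2r}$, which comes from the trace expansion of $\log\det(I-a\mathcal D^{-1}\mathcal A)^{-1}$ and a norm--trace estimate.

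Likewise, the coefficients are not an Euler--Maclaurin computation. The sums are end-dominated and carry $H_{n-1}$ and $H_{n-1}^{(2)}$. Their exact cancellation, among $-\delta U_1+\frac12\delta^2U_2$, the term $\xi_1Q_n'(0)$ and $T_2(0)$, appears only after exact partial-fraction evaluation. As written, the step that produces $-a^2(n+2)/(4(n^2-1)^2)-15a^4/(128n^7)+O(n^{-8})$ is asserted rather than derived, and the tail bound you state is false.

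Smaller points:
\begin{itemize}
\item The mean value theorem is not available for complex-valued $\alpha$; use its integral form.
\item Walks from $-n$ to $n$ never leave $(-n,n)$.
\item Walks of length at least eight contribute $O(n^{-8})$ to $\alpha$ and $\alpha'$ only after pairing inward and outward walks at length eight; a crude count gives only $O(n^{-7})$.
\item Identifying algebraic multiplicities with orders of zeros of $\det S$ needs an argument. The paper avoids this by using the two-eigenvalue count of Anahtarci and Djakov.
\end{itemize}
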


Expanding the exponential in \eqref{eq:main-refined} gives
\begin{equation}\label{eq:main-expanded}
\gamma_n=\pm\frac{8(a/4)^n}{[(n-1)!]^2}
\left[
1-\frac{a^2}{4n^3}-\frac{a^2}{2n^4}-\frac{a^2}{2n^5}
+\Bigl(\frac{a^4}{32}-a^2\Bigr)\frac{1}{n^6}
+\Bigl(\frac{a^4}{128}-\frac{3a^2}{4}\Bigr)\frac{1}{n^7}+O(n^{-8})
\right].
\end{equation}
The correction of order $n^{-3}$ agrees with \eqref{eq:ad}. For a real cosine
potential, Li \cite{Li2023} conjectured from high-accuracy numerical
computations an expansion of the logarithm of the gap length; after rescaling,
its terms through order $n^{-7}$ are those of \eqref{eq:main-refined}.

We use the Lyapunov--Schmidt and Fourier-walk method of
\cite{DjakovMityagin2006,DjakovMityagin2007} as in
\cite{AnahtarciDjakov2012}, following that paper for the reduction, the walk
series, the two shortest classes of closed walks, the walks from $-n$ to $n$
with at most one backstep and the isospectral transfer. Two of its error terms
leave contributions undetermined at the first order beyond \eqref{eq:ad}:
the walks with two or more backsteps are bounded together, relative to the
monotone walk, and the difference of the diagonal entry at the two eigenvalues
is computed only to its leading term
\cite[(4.25) and Lemma~7]{AnahtarciDjakov2012}. We determine both contributions
through relative order $n^{-7}$.

Section~\ref{sec:LS} recalls the reduction. Section~\ref{sec:diagonal}
expands the diagonal entry by pairing the two closed walks that realize each
fixed primitive Dyck excursion, one on each side of $n$, and locates the
spectral centre, the eigenvalue shift that this entry alone would produce,
together with the inverse slope there. Section~\ref{sec:offdiag} proves that
the off-diagonal entry equals the contribution of the monotone walk divided by
a normalized tridiagonal determinant. Section~\ref{sec:proof} expands both
eigenvalues about the common centre. Up to sign and a factorially small
relative error, the splitting is then twice the off-diagonal entry at the
centre times the inverse slope, and combining the results of
Sections~\ref{sec:diagonal} and~\ref{sec:offdiag} yields
\eqref{eq:main-refined}. Corollary~\ref{cor:asymmetric} transfers
Theorem~\ref{thm:main} to $ce^{-2ix}+de^{2ix}$ with $a^2=cd\ne0$, by Veliev's
isospectrality theorem \cite{Veliev2013Isospectral}. Appendices~\ref{app:sums}
and~\ref{app:coupling} evaluate the coefficients and give fixed-index checks.

\section{Lyapunov--Schmidt reduction}\label{sec:LS}

We recall the Lyapunov--Schmidt reduction of
\cite[Lemma~21 and \S2.2]{DjakovMityagin2006}, specialized to the Mathieu
potential in \cite[\S2]{AnahtarciDjakov2012}.

Fix $n\ge2$. The operator $L=L_0+V$, where $L_0=-d^2/dx^2$ and $V$ is
multiplication by $2a\cos(2x)$, acts in $L^2(0,\pi)$ on the functions
$y\in H^2(0,\pi)$ with $y^{(j)}(\pi)=(-1)^ny^{(j)}(0)$, $j=0,1$: periodic
boundary conditions for even $n$, anti-periodic ones for odd $n$. The functions $e^{ikx}$,
$k\in n+2\mathbb Z$, form an orthogonal basis of $L^2(0,\pi)$ satisfying
these conditions, and
\[
L_0e^{ikx}=k^2e^{ikx},\qquad
Ve^{ikx}=a\bigl(e^{i(k+2)x}+e^{i(k-2)x}\bigr).
\]
Thus $n^2$ is a double eigenvalue of $L_0$, with eigenspace
$E_n=\operatorname{span}\{e^{inx},e^{-inx}\}$; we call $e^{\pm inx}$ the
\emph{resonant} modes. Let $P_n$ be the orthogonal
projection onto $E_n$ and $P_n^\perp=I-P_n$. Since $n\ge2$, $V$ does not
couple the resonant modes directly, so $P_nVP_n=0$.

Write $\lambda=n^2+z$ and fix $R\in(0,1)$. The reduction below assumes
\begin{equation}\label{eq:H}
|z|\le R,\qquad 2|a|<4(n-1)-R.
\end{equation}
For fixed $a$ this holds for all $n\ge n_0(a,R)$, and for fixed $n$ it holds
for small $|a|$, the regime of Proposition~\ref{prop:b2}.

On $\operatorname{Dom}(L)\cap\operatorname{Ran}P_n^\perp$, let $\Lambda_n(z)$ act by
$e^{ikx}\mapsto(n^2-k^2+z)e^{ikx}$, $k\ne\pm n$, and set
\[
D_n(z):=P_n^\perp(\lambda-L)P_n^\perp=\Lambda_n(z)-P_n^\perp VP_n^\perp.
\]
For $k=n+2m$ with $m\ne0,-n$, we have $|n^2-k^2|=4|m(n+m)|\ge4(n-1)$, so
$\|\Lambda_n(z)^{-1}\|\le(4(n-1)-R)^{-1}$. Since $\|V\|\le2|a|$,
\eqref{eq:H} gives $\|\Lambda_n(z)^{-1}P_n^\perp VP_n^\perp\|<1$. Hence
$D_n(z)$ maps $\operatorname{Dom}(L)\cap\operatorname{Ran}P_n^\perp$
bijectively onto $\operatorname{Ran}P_n^\perp$, and
\begin{equation}\label{eq:complementary-neumann}
D_n(z)^{-1}
=\sum_{m\ge0}\bigl[\Lambda_n(z)^{-1}P_n^\perp VP_n^\perp\bigr]^m
\Lambda_n(z)^{-1}.
\end{equation}

For $y\in\operatorname{Dom}(L)$, write $y=u+y_\perp$ with $u=P_ny$ and
$y_\perp=P_n^\perp y$. Since $P_nVP_n=0$ and $L_0$ is diagonal,
$(\lambda-L)y=0$ is equivalent to
\[
zu=P_nVy_\perp,\qquad D_n(z)y_\perp=P_n^\perp Vu.
\]
Eliminating $y_\perp=D_n(z)^{-1}P_n^\perp Vu$ gives $S_n(z)u=0$, where
$S_n(z)\colon E_n\to E_n$,
\[
S_n(z):=zI_{E_n}-P_nVD_n(z)^{-1}P_n^\perp VP_n,
\]
is the Schur complement of $D_n(z)$ in $\lambda-L$. Thus
$u\mapsto u+D_n(z)^{-1}P_n^\perp Vu$ is a bijection from $\ker S_n(z)$ onto
$\ker(\lambda-L)$, with inverse $y\mapsto P_ny$. The entries of $S_n(z)$ are
sums over the following walks.

\begin{definition}[Admissible Fourier walk and weight]\label{def:fourier-walk}
An \emph{admissible walk} of length $\ell\ge2$ is a sequence
$x=(j_0,\ldots,j_\ell)$ in $n+2\mathbb Z$ with $|j_s-j_{s-1}|=2$,
$j_0,j_\ell\in\{\pm n\}$, and $j_s\notin\{\pm n\}$ for $0<s<\ell$. It is
\emph{diagonal} if $j_\ell=j_0$ and \emph{off-diagonal} if $j_\ell=-j_0$.
Its weight is
\begin{equation}\label{eq:fourier-walk-weight}
w(x,z)=a^\ell\prod_{s=1}^{\ell-1}\frac{1}{n^2-j_s^2+z}.
\end{equation}
\end{definition}

In the Fourier basis, the $m$th term of \eqref{eq:complementary-neumann}
contributes to the entries of $P_nVD_n(z)^{-1}P_n^\perp VP_n$ exactly the
weights of the admissible walks of length $m+2$: each factor $V$ is a step,
each factor $\Lambda_n(z)^{-1}$ supplies the denominator at an interior
vertex, and $P_n^\perp$ excludes interior visits to $\pm n$. The weights
depend on the vertices only through $j_s^2$, so the reflection
$j_s\mapsto-j_s$ preserves them. Hence, in the basis $(e^{inx},e^{-inx})$,
\begin{equation}\label{eq:Sn-walks}
\begin{gathered}
S_n(z)=
\begin{pmatrix}
z-\alpha_n(z)&-\beta_n(z)\\
-\beta_n(z)&z-\alpha_n(z)
\end{pmatrix},\\[4pt]
\alpha_n(z)=\sum_{x:\,n\to n}w(x,z),
\qquad
\beta_n(z)=\sum_{x:\,-n\to n}w(x,z),
\end{gathered}
\end{equation}
where the sums run over all admissible walks from $n$ to $n$ and from $-n$ to
$n$, respectively. Each interior denominator in \eqref{eq:fourier-walk-weight}
has modulus at least $4(n-1)-|z|$, and at most $2^\ell$ walks of length $\ell$
start at a given point. Hence both series converge absolutely and locally
uniformly on the open set $2|a|+|z|<4(n-1)$, which contains the set defined by
\eqref{eq:H}, and $\alpha_n$, $\beta_n$ are analytic in $z$ there, in
particular on a neighbourhood of $|z|\le R$. Every off-diagonal walk has
length at least $n$; among the walks from $-n$ to $n$, equality holds only for
the \emph{monotone} walk, which has no negative step.

Define
\begin{equation}\label{eq:det}
\Delta_n(z):=\det S_n(z)
=\bigl(z-\alpha_n(z)\bigr)^2-\beta_n(z)^2.
\end{equation}
By the bijection above, under \eqref{eq:H},
\begin{equation}\label{eq:eig-equiv}
\Delta_n(z)=0
\quad\Longleftrightarrow\quad
n^2+z\ \text{is an eigenvalue of }L.
\end{equation}
The bijection identifies eigenvalues and eigenspaces; by itself it does not
identify algebraic multiplicities, and Section~\ref{sec:proof} obtains
simplicity from two distinct eigenvalues located through
\eqref{eq:eig-equiv}, together with an eigenvalue count.

\section{Diagonal estimates}\label{sec:diagonal}

Throughout Sections~\ref{sec:diagonal}--\ref{sec:proof}, we fix
$a\in\C\setminus\{0\}$; thresholds $n_0$ and constants $C$ depend only on $a$
and on the parameters fixed in each statement. All logarithms are principal.

By \eqref{eq:Sn-walks}, $\alpha_n(z)=\sum_{p\ge1}A_{2p-1}(n,z)$, where
$A_{2p-1}(n,z)$ is the sum of the weights of the diagonal walks of length $2p$
based at $n$. For $n>p$, we express $A_{2p-1}(n,z)$ as a sum over primitive Dyck
excursions of semilength $p$, each realized above and below $n$.
Pairing the two weights gives $A_{2p-1}(n,z)=O(n^{-2p})$ for fixed $p$.
Thus, to obtain an $O(n^{-8})$ remainder, we evaluate $A_1$, $A_3$
and $A_5$ and bound the sum of the remaining classes.

\subsection{Pairing of diagonal walks}

Since steps have size $2$ within $n+2\mathbb Z$ and interior visits to $\pm n$
are excluded, a diagonal walk based at $n$ stays strictly on one side of $n$:
it is \emph{outward} ($j_s>n$) or \emph{inward} ($j_s<n$) for $0<s<2p$. In
either case its heights $h_s=|j_s-n|/2$ form a primitive Dyck excursion
$\eta=(h_0,\ldots,h_{2p})$ of semilength $p$: $h_0=h_{2p}=0$,
$|h_s-h_{s-1}|=1$, and $h_s>0$ for $0<s<2p$ \cite[Chapter~6]{Stanley2023}.
Let $\mathcal E_p$ be the finite set of primitive Dyck excursions of
semilength $p$. Conversely, each $\eta\in\mathcal E_p$ is realized by the
outward walk $j_s=n+2h_s$ and, when $n>p$, by its reflection in $n$, the inward
walk $j_s=n-2h_s$, which stays above $-n$ because $h_s\le p<n$. Since $n^2-(n\mp2h)^2=4h(\pm n-h)$, the inward weight
is
\[
W_\eta(n,z):=a^{2p}\prod_{s=1}^{2p-1}\frac{1}{4h_s(n-h_s)+z},
\]
and the outward weight is $W_\eta(-n,z)$.

\begin{proposition}[Paired diagonal classes]\label{prop:fixed-diagonal-bounds}
Fix an integer $p\ge1$. For $n>p$,
\begin{equation}\label{eq:diagonal-pairing}
A_{2p-1}(n,z)=\sum_{\eta\in\mathcal E_p}\bigl[W_\eta(n,z)+W_\eta(-n,z)\bigr],
\end{equation}
and there is a constant $C_p$, depending only on $p$, such that for $n\ge2p$,
\begin{equation}\label{eq:fixed-diagonal-cancellation}
\sup_{|z|\le1}|A_{2p-1}(n,z)|\le C_p|a|^{2p}n^{-2p}.
\end{equation}
\end{proposition}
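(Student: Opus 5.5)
The plan has two parts: first prove the identity \eqref{eq:diagonal-pairing} by a bijection on walks, then obtain \eqref{eq:fixed-diagonal-cancellation} from the cancellation between the inward and outward weights of each excursion.

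\emph{Identity.} Take a diagonal walk $(j_0,\ldots,j_{2p})$ based at $n$. Its interior vertices avoid $\pm n$ and all steps have size $2$, so it cannot cross $n$: it is either outward or inward, as noted before the statement. In both cases the heights $h_s=|j_s-n|/2$ form an excursion $\eta\in\mathcal E_p$. Conversely, fix $\eta\in\mathcal E_p$. The outward walk $j_s=n+2h_s$ is always admissible. The inward walk $j_s=n-2h_s$ is admissible when $n>p$, because $h_s\le p<n$ keeps $j_s>-n$, so it never visits $-n$. This gives a bijection between diagonal walks of length $2p$ and pairs ($\eta$, side). The relation $n^2-(n\mp2h)^2=4h(\pm n-h)$ shows the inward weight is $W_\eta(n,z)$ and the outward weight is $W_\eta(-n,z)$. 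Summing over both sides and all $\eta$ gives \eqref{eq:diagonal-pairing}.

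\emph{Cancellation.} Write $W_\eta(-n,z)=a^{2p}\prod_s\bigl(4h_s(-n-h_s)+z\bigr)^{-1}$. There are $2p-1$ interior factors, an odd number, so pulling a minus sign out of each gives
\[
W_\eta(n,z)+W_\eta(-n,z)=a^{2p}\Bigl[\prod_{s=1}^{2p-1}u_s-\prod_{s=1}^{2p-1}v_s\Bigr],
\]
where
\[
u_s=\frac{1}{4h_s(n-h_s)+z},\qquad v_s=\frac{1}{4h_s(n+h_s)-z}.
\]
Each product is of size $n^{-(2p-1)}$, but their leading terms $\prod(4h_sn)^{-1}$ coincide. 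The aim is to show the difference gains one more factor $n^{-1}$.

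\emph{Bounds.} Use the telescoping estimate
\[
\Bigl|\prod u_s-\prod v_s\Bigr|\le\sum_s|u_s-v_s|\prod_{r\ne s}\max(|u_r|,|v_r|).
\]
For $n\ge2p$ and $|z|\le1$ we have $h_s\le p\le n/2$. Hence
\[
|4h_s(n-h_s)+z|\ge2h_sn-1\ge h_sn,
\]
and the same lower bound holds for $|4h_s(n+h_s)-z|$. So $|u_s|,|v_s|\le(h_sn)^{-1}\le n^{-1}$. For the difference,
\[
u_s-v_s=\frac{8h_s^2-2z}{\bigl(4h_s(n-h_s)+z\bigr)\bigl(4h_s(n+h_s)-z\bigr)},
\]
and therefore
\[
|u_s-v_s|\le\frac{8p^2+2}{h_s^2n^2}\le\frac{8p^2+2}{n^2}.
\]
Combining these, each paired term is bounded by $(2p-1)(8p^2+2)|a|^{2p}n^{-2p}$. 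Summing over the $|\mathcal E_p|$ excursions (a Catalan number depending only on $p$) gives \eqref{eq:fixed-diagonal-cancellation} with
\[
C_p=|\mathcal E_p|\,(2p-1)(8p^2+2).
\]

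The only delicate point is this cancellation of the leading order, which the odd number of interior factors makes possible. The rest is the uniform lower bound on the denominators, and that is exactly what the hypothesis $n\ge2p$ provides.
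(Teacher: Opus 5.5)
Your proof is correct and follows the paper's argument. It uses the same side/excursion bijection, the sign $(-1)^{2p-1}$ from the odd number of interior factors, and a telescoping estimate whose per-factor differences are $8h_s^2-2z$. The only difference is that you telescope the reciprocals $u_s,v_s$ directly, where the paper telescopes the products $\Pi^\pm$ and divides by $\Pi^+\Pi^-$; this gives you the smaller constant $|\mathcal E_p|(2p-1)(8p^2+2)$, without the paper's extra factor $(7p)^{2p-2}$.
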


\begin{proof}
For $n>p$, every diagonal walk of length $2p$ based at $n$
corresponds uniquely to a side and an excursion in $\mathcal E_p$,
and every such pair gives an admissible walk. Summing the inward
weights $W_\eta(n,z)$ and the outward weights $W_\eta(-n,z)$ over
$\eta\in\mathcal E_p$ gives \eqref{eq:diagonal-pairing}.
For the bound, fix $\eta\in\mathcal E_p$, $n\ge2p$ and $|z|\le1$, and put
$\Pi^+=\prod_{s=1}^{2p-1}\bigl(4h_s(n-h_s)+z\bigr)$ and
$\Pi^-=\prod_{s=1}^{2p-1}\bigl(4h_s(n+h_s)-z\bigr)$. Then
$W_\eta(n,z)=a^{2p}/\Pi^+$ and, since $2p-1$ is odd,
$W_\eta(-n,z)=-a^{2p}/\Pi^-$, so
\[
W_\eta(n,z)+W_\eta(-n,z)=a^{2p}\,\frac{\Pi^--\Pi^+}{\Pi^+\Pi^-}.
\]
As $1\le h_s\le p\le n/2$, each factor of $\Pi^\pm$ has modulus between $n$
and $7pn$, and corresponding factors differ by $8h_s^2-2z$, of modulus at most
$8p^2+2$. Writing $\Pi^--\Pi^+$ as a telescoping sum of $2p-1$ terms, each
containing one such difference, gives
$|\Pi^--\Pi^+|\le(2p-1)(8p^2+2)(7pn)^{2p-2}$, while
$|\Pi^+\Pi^-|\ge n^{4p-2}$. Hence
$|W_\eta(n,z)+W_\eta(-n,z)|\le(2p-1)(8p^2+2)(7p)^{2p-2}|a|^{2p}n^{-2p}$, and
summing over the finite set $\mathcal E_p$ gives
\eqref{eq:fixed-diagonal-cancellation}.
\end{proof}

Each $W_\eta(n,z)$ is of order $n^{1-2p}$, but the sum over the two
realizations keeps only its even part in $n$ and gains one power of $n$. For
$p=3$ this gives $A_5=O(n^{-6})$.

\subsection{The classes \texorpdfstring{$A_1$, $A_3$, $A_5$}{A1, A3, A5} and the tail}

To evaluate the first three classes, put
$e_j^\pm(z):=4j(\pm n-j)+z$, with $+$ for the inward realization
and $-$ for the outward one. The primitive Dyck excursions of
semilength $1$ and $2$ are $(0,1,0)$ and $(0,1,2,1,0)$; those of
semilength $3$ are $(0,1,2,3,2,1,0)$ and $(0,1,2,1,2,1,0)$.
Substituting them into \eqref{eq:diagonal-pairing} gives
$A_1=a^2f_n$ for $n\ge2$, $A_3=a^4g_n$ for $n\ge3$ and
$A_5=a^6h_n$ for $n\ge4$, where
\begin{equation}\label{eq:A1}
f_n(z)=\sum_{\pm}\frac{1}{e_1^\pm(z)}
=\frac{1}{4(n-1)+z}+\frac{1}{-4(n+1)+z}
=\frac{8-2z}{(4n)^2-(4-z)^2},
\end{equation}
\begin{equation}\label{eq:A3}
g_n(z)=\sum_{\pm}\frac{1}{(e_1^\pm)^2e_2^\pm},
\end{equation}
\begin{equation}\label{eq:A5}
h_n(z)=\sum_{\pm}\biggl[\frac{1}{(e_1^\pm)^2(e_2^\pm)^2e_3^\pm}
+\frac{1}{(e_1^\pm)^3(e_2^\pm)^2}\biggr].
\end{equation}
By Proposition~\ref{prop:fixed-diagonal-bounds} with $p=1,2,3$ and $a=1$,
$f_n=O(n^{-2})$, $g_n=O(n^{-4})$ and $h_n=O(n^{-6})$ uniformly on $|z|\le1$.
Hence, by Cauchy's estimate, for fixed $0<\rho<1$ and $k\ge0$,
\begin{equation}\label{eq:fg-bounds}
\sup_{|z|\le\rho}\bigl|f_n^{(k)}(z)\bigr|=O(n^{-2}),\qquad
\sup_{|z|\le\rho}\bigl|g_n^{(k)}(z)\bigr|=O(n^{-4}),\qquad
\sup_{|z|\le\rho}\bigl|h_n^{(k)}(z)\bigr|=O(n^{-6}).
\end{equation}
Put $\tau_n(z):=\sum_{p\ge4}A_{2p-1}(n,z)$.
For $n\ge4$ and $z$ in the convergence region of Section~\ref{sec:LS},
\[
\alpha_n(z)=a^2f_n(z)+a^4g_n(z)+a^6h_n(z)+\tau_n(z).
\]

\begin{proposition}[Diagonal tail]\label{prop:diagonal-tail-uniform}
There is a constant $C$ such that for $n\ge\max\{8,2|a|\}$,
\[
\sup_{|z|\le1}|\tau_n(z)|\le Cn^{-8}.
\]
Consequently, for each fixed $0<\rho<1$ and integer $k\ge0$,
\begin{equation}\label{eq:diagonal-derivative-bounds}
\sup_{|z|\le\rho}\bigl|\tau_n^{(k)}(z)\bigr|=O(n^{-8}),\qquad
\sup_{|z|\le\rho}\bigl|\alpha_n^{(k)}(z)\bigr|=O(n^{-2}).
\end{equation}
\end{proposition}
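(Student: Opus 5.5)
We need to bound $\tau_n(z)=\sum_{p\ge4}A_{2p-1}(n,z)$ uniformly in $p$. Proposition~\ref{prop:fixed-diagonal-bounds} only gives constants $C_p$ that grow with $p$, so it cannot simply be summed. Instead I would split the tail at a threshold. For moderate $p$ I use the pairing cancellation with explicit $p$-dependence. For large $p$ I use a crude bound without cancellation, which already decays fast enough because of the extra powers of $n^{-1}$.

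\emph{Crude bound.} Every interior denominator satisfies $|n^2-j_s^2+z|\ge4(n-1)-1\ge3n$ for $n\ge8$, $|z|\le1$. At most $2^{2p}$ diagonal walks of length $2p$ start at $n$. Hence
\[
|A_{2p-1}(n,z)|\le 4^p|a|^{2p}(3n)^{1-2p}=3n\Bigl(\tfrac{2|a|}{3n}\Bigr)^{2p}.
\]
Summing over $p\ge5$ gives $O\bigl(n(2|a|/3n)^{10}\bigr)=O(n^{-9})$, since $2|a|/(3n)\le1/3$ when $n\ge2|a|$. So only $p=4$ needs cancellation: the crude bound there is $O(n^{-7})$, which is one power short.

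\emph{The term $p=4$.} For $n\ge8$, Proposition~\ref{prop:fixed-diagonal-bounds} with $p=4$ (valid since $n\ge2p$) gives $|A_7(n,z)|\le C_4|a|^8n^{-8}$ on $|z|\le1$. Therefore $\tau_n=A_7+\sum_{p\ge5}A_{2p-1}=O(n^{-8})$, with $C$ depending only on $a$.

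The one point to check is that the geometric sum over $p\ge5$ is dominated uniformly under $n\ge\max\{8,2|a|\}$. The ratio is $(2|a|/3n)^2\le1/9$, so the sum is at most $\tfrac98\cdot3n(2|a|/3n)^{10}\le Cn^{-9}$. Thus no pairing with $p$-uniform constants is needed; the main obstacle dissolves once only $p=4$ requires cancellation.

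\emph{Consequences.} The bound $\sup_{|z|\le\rho}|\tau_n^{(k)}|=O(n^{-8})$ follows from Cauchy's estimate on discs of radius $1-\rho$ centred at points with $|z|\le\rho$, since $\tau_n$ is analytic on a neighbourhood of $|z|\le1$ (Section~\ref{sec:LS}). For $\alpha_n=a^2f_n+a^4g_n+a^6h_n+\tau_n$, the bounds \eqref{eq:fg-bounds} give derivatives of order $O(n^{-2})$, $O(n^{-4})$ and $O(n^{-6})$ respectively. Combined with the $\tau_n$ bound, this gives $\sup_{|z|\le\rho}|\alpha_n^{(k)}|=O(n^{-2})$.
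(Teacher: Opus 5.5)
Your proof is correct and follows the paper's argument. The $p=4$ class is handled by the pairing bound of Proposition~\ref{prop:fixed-diagonal-bounds}, which applies since $n\ge8$, and the classes $p\ge5$ by the crude count of $4^p$ step sequences with interior denominators bounded below (by $3n$ in your version, $2n$ in the paper's); the derivative bounds then follow from Cauchy's estimate together with \eqref{eq:fg-bounds}, and the one point you leave implicit, that $|z|\le1$ lies in the convergence region $2|a|+|z|<4(n-1)$ when $n\ge\max\{8,2|a|\}$, is immediate.
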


\begin{proof}
For $n\ge\max\{8,2|a|\}$ the closed unit disc lies in the region
$2|a|+|z|<4(n-1)$ of Section~\ref{sec:LS}, where the walk series are
analytic. For $p=4$, Proposition~\ref{prop:fixed-diagonal-bounds} gives
$O(n^{-8})$. For $p\ge5$, a walk of length $2p$ has at most $4^p$ step
sequences, and each interior denominator satisfies
$|n^2-j_s^2+z|\ge4(n-1)-1\ge2n$. Hence
$\sup_{|z|\le1}|A_{2p-1}(n,z)|\le4^p|a|^{2p}(2n)^{1-2p}=2n(|a|^2/n^2)^p$,
and the sum over $p\ge5$ is $O(n^{-9})$ since $|a|^2/n^2\le\frac14$. This
proves the bound on $\tau_n$. Cauchy's estimate gives the bound on
$\tau_n^{(k)}$ in \eqref{eq:diagonal-derivative-bounds}, and the decomposition
of $\alpha_n$ above, with \eqref{eq:fg-bounds}, gives the bound on
$\alpha_n^{(k)}$.
\end{proof}

Applied to every $p\ge4$, the count gives
$\sum_{p\ge4}\sup_{|z|\le1}|A_{2p-1}(n,z)|=O(n^{-7})$, recovering
\cite[(3.23)]{AnahtarciDjakov2012}; pairing the $p=4$ class gains one
further power of $n$.

\subsection{Spectral centre and inverse slope}

Put $F_n(z):=z-\alpha_n(z)$, so that
$\Delta_n(z)=F_n(z)^2-\beta_n(z)^2$ by \eqref{eq:det}.
We locate the zero $z_n^0$ of $F_n$ near $0$ and expand the inverse slope
$1/F_n'(z_n^0)$ used in Section~\ref{sec:proof}.

Matching successive powers of $a^2$ in the fixed-point equation
$z=\alpha_n(z)$ gives the coefficients $\xi_1$ for $n\ge2$, $\xi_2$ for
$n\ge3$ and $\xi_3$ for $n\ge4$:
\begin{equation}\label{eq:xi}
\begin{gathered}
\xi_1:=f_n(0)=\frac{1}{2(n^2-1)},\qquad
\xi_2:=f_n'(0)\,\xi_1+g_n(0)=\frac{5n^2+7}{32(n^2-4)(n^2-1)^3},\\
\xi_3:=f_n'(0)\,\xi_2+\tfrac12f_n''(0)\,\xi_1^2+g_n'(0)\,\xi_1+h_n(0)
=\frac{9n^4+58n^2+29}{64(n^2-9)(n^2-4)(n^2-1)^5}.
\end{gathered}
\end{equation}
The closed forms follow from \eqref{eq:A1}--\eqref{eq:A5}; in $\xi_2$, the terms $-1/(16n^4)$ of
$f_n'(0)\xi_1$ and $1/(16n^4)$ of $g_n(0)$ cancel.

\begin{lemma}[Spectral centre]\label{lem:center}
For large $n$, $F_n$ has a unique zero $z_n^0$ in
$|z|\le\frac12$, and
\begin{equation}\label{eq:center-trunc}
z_n^0=\xi_1a^2+\xi_2a^4+\xi_3a^6+O(n^{-8}).
\end{equation}
In particular,
\begin{equation}\label{eq:center-exp}
z_n^0=\frac{a^2}{2n^2}+\frac{a^2}{2n^4}+\frac{16a^2+5a^4}{32n^6}+O(n^{-8}).
\end{equation}
\end{lemma}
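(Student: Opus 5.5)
Existence and uniqueness of $z_n^0$ come from Rouché's theorem, or equivalently a contraction argument, applied to $F_n(z)=z-\alpha_n(z)$ on $|z|\le\frac12$. Fix $\rho\in(\frac12,1)$. By \eqref{eq:diagonal-derivative-bounds} with $k=0,1$, we have $\sup_{|z|\le\rho}|\alpha_n(z)|=O(n^{-2})$ and $\sup_{|z|\le\rho}|\alpha_n'(z)|=O(n^{-2})$. Hence, for large $n$, $\alpha_n$ maps the closed disc $|z|\le\frac12$ into itself and is a contraction there with constant $O(n^{-2})$. The Banach fixed point theorem then gives a unique zero $z_n^0$ of $F_n$ in that disc. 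The same bound shows $|z_n^0|=|\alpha_n(z_n^0)|=O(n^{-2})$, and this a priori size drives the rest of the argument.

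For \eqref{eq:center-trunc} I would bootstrap on the fixed-point equation using the decomposition
\[
\alpha_n=a^2f_n+a^4g_n+a^6h_n+\tau_n .
\]
Set $\zeta:=\xi_1a^2+\xi_2a^4+\xi_3a^6$, so that $\zeta=O(n^{-2})$ by \eqref{eq:xi}. Expand each of $f_n,g_n,h_n$ in a Taylor series about $0$ at the point $\zeta$, with remainders controlled by \eqref{eq:fg-bounds}:
\[
f_n(\zeta)=f_n(0)+f_n'(0)\zeta+\tfrac12f_n''(0)\zeta^2+O(n^{-2}|\zeta|^3),\qquad
g_n(\zeta)=g_n(0)+g_n'(0)\zeta+O(n^{-4}|\zeta|^2),\qquad
h_n(\zeta)=h_n(0)+O(n^{-6}|\zeta|).
\]
The three remainders are $O(n^{-8})$, $O(n^{-8})$ and $O(n^{-8})$. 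By Proposition~\ref{prop:diagonal-tail-uniform}, $\tau_n(\zeta)=O(n^{-8})$.

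Next I collect powers of $a^2$ in $\alpha_n(\zeta)$. Using the defining relations \eqref{eq:xi} for $\xi_1,\xi_2,\xi_3$, the coefficients of $a^2,a^4,a^6$ reproduce $\xi_1,\xi_2,\xi_3$. The terms of order $a^8$ and higher are products such as $f_n'(0)\xi_3a^8$, $f_n''(0)\xi_1\xi_2a^{8}$, $g_n'(0)\xi_2a^8$ and $g_n''(0)\xi_1^2a^8$. Since $f_n^{(k)}=O(n^{-2})$, $g_n^{(k)}=O(n^{-4})$ and $\xi_j=O(n^{-2j})$, each of these is $O(n^{-8})$. It follows that $F_n(\zeta)=\zeta-\alpha_n(\zeta)=O(n^{-8})$.

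To conclude, apply the mean value inequality on the segment between $\zeta$ and $z_n^0$, which lies in $|z|\le\rho$. Since $F_n'=1-\alpha_n'=1+O(n^{-2})$ there, we get
\[
|z_n^0-\zeta|\le(1+O(n^{-2}))\,|F_n(\zeta)-F_n(z_n^0)|=O(n^{-8}),
\]
which proves \eqref{eq:center-trunc}.

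Expansion \eqref{eq:center-exp} is then pure algebra on the closed forms in \eqref{eq:xi}:
\[
\xi_1=\tfrac1{2n^2}+\tfrac1{2n^4}+\tfrac1{2n^6}+O(n^{-8}),\qquad
\xi_2=\tfrac{5}{32n^6}+O(n^{-8}),\qquad
\xi_3=O(n^{-10}).
\]
Summing gives $\frac{a^2}{2n^2}+\frac{a^2}{2n^4}+\frac{16a^2+5a^4}{32n^6}+O(n^{-8})$.

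The main obstacle is verifying the closed forms of $\xi_2$ and $\xi_3$ in \eqref{eq:xi}, in particular the stated cancellation of the $n^{-4}$ terms. Only the order of magnitude $\xi_2=O(n^{-6})$, with leading coefficient $\frac5{32}$, matters for \eqref{eq:center-exp}, and Proposition~\ref{prop:fixed-diagonal-bounds} alone only gives $\xi_2=O(n^{-4})$. I would therefore check this by direct rational-function computation from \eqref{eq:A1}--\eqref{eq:A3}: $f_n'(0)=-1/(4(n^2-1)^2)+O(n^{-6})$ and $g_n(0)=1/(16n^4)+O(n^{-6})$, which exhibits the cancellation, followed by a partial-fraction simplification to the stated forms.
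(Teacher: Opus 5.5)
Your proposal is correct and follows the paper's proof essentially step for step: a contraction argument for $\alpha_n$ on $|z|\le\frac12$, Taylor expansion of $f_n,g_n,h_n$ at $\xi_1a^2+\xi_2a^4+\xi_3a^6$ to orders $2,1,0$ with $\tau_n=O(n^{-8})$, coefficient matching through \eqref{eq:xi}, and a final contraction estimate, to which your integral mean-value bound on $F_n$ is equivalent. One slip in your side check of \eqref{eq:xi}: $f_n'(0)=-(n^2+1)/(8(n^2-1)^2)\sim-1/(8n^2)$, not $-1/(4(n^2-1)^2)$, so it is the product $f_n'(0)\xi_1=-1/(16n^4)-1/(4n^6)+O(n^{-8})$ that cancels $g_n(0)=1/(16n^4)+13/(32n^6)+O(n^{-8})$ at order $n^{-4}$, leaving the leading coefficient $\frac{5}{32}$ of $\xi_2$.
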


\begin{proof}
By Proposition~\ref{prop:diagonal-tail-uniform} with $\rho=\frac12$,
$\alpha_n$ and $\alpha_n'$ are $O(n^{-2})$ and $\tau_n$ is $O(n^{-8})$,
uniformly on $|z|\le\frac12$. Hence, for large $n$, $\alpha_n$ is
a $\frac12$-contraction of $\{|z|\le\frac12\}$ into itself, and its unique
fixed point $z_n^0$ is the unique zero of $F_n$ there.

Put $w:=\xi_1a^2+\xi_2a^4+\xi_3a^6$; by \eqref{eq:fg-bounds},
$\xi_r=O(n^{-2r})$, so $w=O(n^{-2})$. Expanding $f_n$, $g_n$ and $h_n$ about
$0$ to orders $2$, $1$ and $0$, with the remainders bounded by
\eqref{eq:fg-bounds}, and using $\tau_n(w)=O(n^{-8})$, gives
\[
\alpha_n(w)=a^2\bigl[\xi_1+f_n'(0)w+\tfrac12f_n''(0)w^2\bigr]
+a^4\bigl[g_n(0)+g_n'(0)w\bigr]+a^6h_n(0)+O(n^{-8}).
\]
With $w$ substituted, the terms of order $a^2$, $a^4$ and $a^6$ are those of $w$
by \eqref{eq:xi}, and the rest are $O(n^{-8})$; thus $\alpha_n(w)=w+O(n^{-8})$.
Since $z_n^0=\alpha_n(z_n^0)$ and $\alpha_n$ is a $\frac12$-contraction,
$|z_n^0-w|\le\frac12|z_n^0-w|+|\alpha_n(w)-w|$, which gives
\eqref{eq:center-trunc}. Finally, $\xi_1=1/(2n^2)+1/(2n^4)+1/(2n^6)+O(n^{-8})$,
$\xi_2=5/(32n^6)+O(n^{-8})$ and $\xi_3=O(n^{-10})$ turn
\eqref{eq:center-trunc} into \eqref{eq:center-exp}.
\end{proof}

We call $z_n^0$ the \emph{spectral centre}, the eigenvalue shift
obtained by omitting the off-diagonal entry in the reduction.

For $r=1,2,3$ and the corresponding range of $n$ in
\eqref{eq:xi}, let $\omega_r$ be the defining expression for $\xi_r$ with
$f_n,g_n,h_n$ replaced by $f_n',g_n',h_n'$ and the coefficients $\xi_j$,
$j<r$, kept unchanged. The coefficients of $a^2$, $a^4$ and $a^6$ in
$-\log(1-\omega_1a^2-\omega_2a^4-\omega_3a^6)$ are
\begin{equation}\label{eq:s-def}
s_1:=\omega_1,\qquad s_2:=\omega_2+\tfrac12\omega_1^2,\qquad
s_3:=\omega_3+\omega_1\omega_2+\tfrac13\omega_1^3.
\end{equation}

\begin{proposition}[Inverse slope at the centre]\label{prop:alpha-derivative}
For large $n$, $F_n'(z_n^0)\ne0$,
\begin{equation}\label{eq:inverse-alpha}
\log\frac{1}{F_n'(z_n^0)}=s_1a^2+s_2a^4+s_3a^6+O(n^{-8}),
\end{equation}
and $s_r=O(n^{-2r})$ for $r=1,2,3$.
\end{proposition}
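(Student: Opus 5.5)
We compute $F_n'(z_n^0)=1-\alpha_n'(z_n^0)$ directly from the decomposition $\alpha_n=a^2f_n+a^4g_n+a^6h_n+\tau_n$, in the same way as the centre was located in Lemma~\ref{lem:center}. By Proposition~\ref{prop:diagonal-tail-uniform} with $\rho=\frac12$, $\alpha_n'=O(n^{-2})$ uniformly on $|z|\le\frac12$. Since $z_n^0$ lies in that disc, $|\alpha_n'(z_n^0)|\le\frac12$ for large $n$. Hence $F_n'(z_n^0)\ne0$, and $\log(1/F_n'(z_n^0))=-\log(1-\alpha_n'(z_n^0))$ is defined by the principal branch.

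The main step is the expansion
\[
\alpha_n'(z_n^0)=\omega_1a^2+\omega_2a^4+\omega_3a^6+O(n^{-8}).
\]
\begin{itemize}
\item \emph{Tail.} We have $\tau_n'(z_n^0)=O(n^{-8})$ by \eqref{eq:diagonal-derivative-bounds}.
\item \emph{Taylor expansions.} Expand $f_n'$, $g_n'$ and $h_n'$ about $0$ to orders $2$, $1$ and $0$. The remainders are $O(n^{-2})|z_n^0|^3$, $O(n^{-4})|z_n^0|^2$ and $O(n^{-6})|z_n^0|$ by \eqref{eq:fg-bounds}. Since $z_n^0=O(n^{-2})$, each remainder is $O(n^{-8})$.
\item \emph{Substituting the centre.} Insert $z_n^0=\xi_1a^2+\xi_2a^4+\xi_3a^6+O(n^{-8})$ from \eqref{eq:center-trunc}, together with $\xi_r=O(n^{-2r})$. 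This gives
\[
a^2\bigl[f_n'(0)+f_n''(0)(\xi_1a^2+\xi_2a^4)+\tfrac12f_n'''(0)\xi_1^2a^4\bigr]+a^4\bigl[g_n'(0)+g_n''(0)\xi_1a^2\bigr]+a^6h_n'(0)+O(n^{-8}).
\]
\item \emph{Matching with $\omega_r$.} This is $\omega_1a^2+\omega_2a^4+\omega_3a^6$ by the definition of $\omega_r$: each of $f_n,g_n,h_n$ in \eqref{eq:xi} is replaced by its derivative, with the $\xi_j$ kept fixed.
\item \emph{Discarded terms.} The dropped terms have size at most $O(n^{-2})\cdot O(n^{-6})$, for example $f_n''(0)\xi_3a^8$ and $f_n'''(0)\xi_1\xi_2a^8$. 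The rest are products with even smaller orders.
\end{itemize}

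Next, each $\omega_r=O(n^{-2r})$. This follows from \eqref{eq:fg-bounds} at $z=0$ for all derivatives, together with $\xi_j=O(n^{-2j})$: every term in $\omega_r$ is a product whose $n$-orders add to $2r$. Then \eqref{eq:s-def} gives $s_r=O(n^{-2r})$.

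Finally, write $u:=\alpha_n'(z_n^0)$ and $u_0:=\omega_1a^2+\omega_2a^4+\omega_3a^6$, so that $u-u_0=O(n^{-8})$. We have $-\log(1-u)=-\log(1-u_0)+O(n^{-8})$, because $\log$ is Lipschitz on $|1-\cdot|\le\frac12$. Expand $-\log(1-u_0)=\sum_{k\ge1}u_0^k/k$. Collect the terms of total $n$-order at most $6$, using $\omega_r a^{2r}=O(n^{-2r})$. These terms are exactly $s_1a^2+s_2a^4+s_3a^6$. All other terms, including $u_0^k$ for $k\ge4$, are $O(n^{-8})$. This proves \eqref{eq:inverse-alpha}.

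The main obstacle is bookkeeping rather than analysis. One must check that the definition of $\omega_r$ really matches the chain-rule expansion of $\alpha_n'$ at the centre. In particular, the coefficient of $f_n''(0)$ in $\omega_2$ must come out as $\xi_1$ (from $f_n'\mapsto f_n''$ applied to $f_n'(0)\xi_1$), and the quadratic term $\tfrac12f_n'''(0)\xi_1^2$ must appear in $\omega_3$. I would verify this term by term: differentiating $\alpha_n$ once and then evaluating at the truncated centre reproduces the formulas in \eqref{eq:xi} with one extra derivative on each of $f_n,g_n,h_n$.
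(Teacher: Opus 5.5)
Your proposal is correct and follows essentially the same route as the paper. You decompose $\alpha_n'=a^2f_n'+a^4g_n'+a^6h_n'+\tau_n'$, Taylor-expand to orders $2,1,0$ using the derivative bounds, substitute the truncated centre, match with $\omega_r$, and then expand $-\log(1-x)$ by degree in $a$. The only differences are cosmetic: the paper first moves from $z_n^0$ to $w=\xi_1a^2+\xi_2a^4+\xi_3a^6$ at cost $O(n^{-10})$ and reuses the Taylor expansion from the spectral-centre lemma, whereas you expand at $z_n^0$ and substitute afterwards; you also insert a Lipschitz step for the logarithm that the paper folds into $-\log(1-x)=x+\tfrac12x^2+\tfrac13x^3+O(x^4)$.
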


\begin{proof}
Put $w:=\xi_1a^2+\xi_2a^4+\xi_3a^6=O(n^{-2})$. By \eqref{eq:center-trunc} and
\eqref{eq:diagonal-derivative-bounds}, $\alpha_n'(z_n^0)=\alpha_n'(w)+O(n^{-10})$.
Since $\alpha_n'=a^2f_n'+a^4g_n'+a^6h_n'+\tau_n'$, the Taylor expansion of
$\alpha_n(w)$ in the proof of Lemma~\ref{lem:center}, applied to
$f_n',g_n',h_n'$, which satisfy the same bounds \eqref{eq:fg-bounds}, and
with $\tau_n'=O(n^{-8})$ by \eqref{eq:diagonal-derivative-bounds}, gives, by
the definition of $\omega_r$,
$\alpha_n'(z_n^0)=\omega_1a^2+\omega_2a^4+\omega_3a^6+O(n^{-8})$ with
$\omega_r=O(n^{-2r})$. In particular $\alpha_n'(z_n^0)=O(n^{-2})$, so
$F_n'(z_n^0)=1-\alpha_n'(z_n^0)\ne0$. With the principal branch of the
logarithm, $\log(1/F_n'(z_n^0))=-\log(1-x)$ with $x=\alpha_n'(z_n^0)$, and
the series $-\log(1-x)=x+\frac12x^2+\frac13x^3+O(x^4)$ gives
\eqref{eq:inverse-alpha}, since the terms of degree at least eight in $a$ are
$O(n^{-8})$. By \eqref{eq:s-def}, the bounds on $\omega_r$ give
$s_r=O(n^{-2r})$.
\end{proof}

\section{Off-diagonal estimates}\label{sec:offdiag}

The monotone walk carries the factorial size of $\beta_n$, so we measure
all walks from $-n$ to $n$ relative to it. We first group them by their
backsteps, each of which contributes a local factor
(\S\ref{subsec:backsteps}). Relative to the monotone walk, the sum over all
classes, the \emph{backstep factor}, is the reciprocal of a tridiagonal determinant, whose logarithm is a
sum over closed walks on a path graph (\S\ref{subsec:logdet}). We then
expand the logarithms of the monotone-walk ratio and the backstep factor at
the centre $z_n^0$ of Lemma~\ref{lem:center}.

\subsection{Backstep classes}\label{subsec:backsteps}

For $p\ge0$, let $\mathcal X_n(p)$ be the set of off-diagonal walks from $-n$
to $n$ (Definition~\ref{def:fourier-walk}) with exactly $p$ negative steps,
or \emph{backsteps}. Such a walk has $n+p$ positive steps and length $n+2p$.
By \eqref{eq:Sn-walks},
\[
\beta_n(z)=\sum_{p\ge0}\sigma_p(n,z),
\qquad
\sigma_p(n,z)=\sum_{x\in\mathcal X_n(p)}w(x,z).
\]
In the height coordinate $u=(j+n)/2$, these walks run from $0$ to $n$ with
unit steps. A walk that went below $0$ or above $n$ would visit $0$ or $n$ at
an interior step, which is forbidden; so the interior heights lie in
$\{1,\ldots,n-1\}$. A vertex at
height $k$ contributes the denominator
\begin{equation}\label{eq:dk}
d_k(z):=n^2-(2k-n)^2+z=4k(n-k)+z.
\end{equation}
The class $\mathcal X_n(0)$ consists of the monotone walk alone. Since
$\prod_{k=1}^{n-1}d_k(0)=4^{n-1}[(n-1)!]^2$,
\begin{equation}\label{eq:sigma0}
\sigma_0(n,z)=\frac{a^n}{\prod_{k=1}^{n-1}d_k(z)},
\qquad
\sigma_0(n,0)=\frac{4(a/4)^n}{[(n-1)!]^2}.
\end{equation}
Set $M_n:=|\sigma_0(n,0)|$. Put $D_k:=d_k(0)=4k(n-k)$ and
$U_j:=\sum_{k=1}^{n-1}D_k^{-j}$. Since
$1/(k(n-k))=(1/k+1/(n-k))/n$, we have $U_1=H_{n-1}/(2n)$, where
$H_m:=\sum_{r=1}^mr^{-1}$. For $2\le k\le n-1$, put
\begin{equation}\label{eq:qk}
q_k(z)=\frac{1}{d_{k-1}(z)d_k(z)},
\qquad
Q_n(z)=\sum_{k=2}^{n-1}q_k(z).
\end{equation}
Since $|d_k(z)|\ge2k(n-k)$ for $|z|\le1$ and $q_{n+1-k}=q_k$, splitting
the sum at the midpoint gives
\begin{equation}\label{eq:Omega}
\sum_{k=2}^{n-1}\sup_{|z|\le1}|q_k(z)|=O(n^{-2}).
\end{equation}

\begin{lemma}[Backstep sources]\label{lem:backsteps}
A walk $x\in\mathcal X_n(p)$ is determined by the heights $k_1,\ldots,k_p$
from which its backsteps start, in order of occurrence. These satisfy
$2\le k_i\le n-1$ and $k_{i+1}\ge k_i-1$, every such sequence arises, and
\[
\frac{w(x,z)}{\sigma_0(n,z)}=\prod_{i=1}^p a^2q_{k_i}(z).
\]
\end{lemma}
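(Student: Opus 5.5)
We will prove Lemma~\ref{lem:backsteps} by a direct combinatorial bijection, working in the height coordinate $u=(j+n)/2$. In this coordinate a walk in $\mathcal X_n(p)$ is a lattice path $u_0=0,u_1,\dots,u_{n+2p}=n$ with unit steps. As noted above, its interior heights lie in $\{1,\dots,n-1\}$.

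\textbf{Step 1: extract the sources.} Given $x$, record the height $k_i$ at which the $i$th backstep starts, so the path goes $k_i\to k_i-1$. Since $k_i-1\ge1$ and $k_i\le n-1$ (a backstep cannot start at $n$, which is visited only at the end), we have $2\le k_i\le n-1$. Between backstep $i$ (ending at height $k_i-1$) and backstep $i+1$ there are only forward steps. Hence $k_{i+1}\ge k_i-1$, with equality exactly when the two backsteps are consecutive.

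\textbf{Step 2: reconstruct the walk.} Conversely, given any sequence $(k_i)$ satisfying these constraints, build the path in stages:
\begin{itemize}
\item go up from $0$ to $k_1$;
\item step down to $k_1-1$;
\item go up from $k_1-1$ to $k_2$, which is possible since $k_2\ge k_1-1$;
\item continue in the same way, and after the last backstep go up to $n$.
\end{itemize}
The result has exactly $p$ backsteps, and its interior heights lie in $[1,n-1]$ because all $k_i\le n-1$ and all $k_i-1\ge1$. It visits $n$ only at the end, since every stage before the last peaks at some $k_i\le n-1$, and it visits $0$ only at the start. So it is admissible. The two maps are mutually inverse, because a walk is determined by where its backsteps occur among the forward runs.

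\textbf{Step 3: the weight identity.} I would argue by induction on $p$, tracking vertex multiplicities. Compare the multiset of interior vertex heights of $x$ with that of the monotone walk, whose interior heights are $1,\dots,n-1$, each once. Inserting a backstep from $k$ to $k-1$ and then climbing back up adds one extra visit to $k-1$ (after the drop) and one extra visit to $k$ (on the way back up, before exceeding $k$). Concretely, the path of Step~2 visits each height $h\in[1,n-1]$ once from the overall rise, plus once for each $i$ with $h=k_i-1$, plus once for each $i$ with $h=k_i$ (counting the revisit of $k_i$ after the drop). The bookkeeping check is that the run from $k_i-1$ up to $k_{i+1}$ revisits $k_i-1$ itself and the heights $k_i,\dots,k_{i+1}$. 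Those heights were already passed, apart from the new peak portion. Using the telescoping of the peaks, the net extra visits are exactly $\{k_i-1,k_i\}$ for each $i$.

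Each step contributes one factor $a$, so the $2p$ extra steps give $a^{2p}$. The extra denominators give $\prod_i d_{k_i-1}d_{k_i}$. This yields $w(x,z)=\sigma_0(n,z)\prod_i a^2q_{k_i}(z)$ by \eqref{eq:sigma0} and \eqref{eq:qk}.

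\textbf{Main obstacle.} The delicate part is the multiplicity count in Step~3 when $k_{i+1}<k_i$ or when backsteps are consecutive. I would handle it cleanly by induction on $p$. Removing the last backstep and its compensating up-step — that is, deleting the down-step $k_p\to k_p-1$ together with the first subsequent up-step $k_p-1\to k_p$ — yields the walk for $(k_1,\dots,k_{p-1})$, and removes exactly one visit each at heights $k_p$ and $k_p-1$.
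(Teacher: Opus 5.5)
Your proposal is correct and follows the paper's proof: the same encoding of a walk by its backstep sources with $2\le k_i\le n-1$ and $k_{i+1}\ge k_i-1$, the same reconstruction showing that every such sequence arises, and the same conclusion that each backstep from $k$ adds two steps and one extra visit at each of $k-1$ and $k$, which gives the factor $a^2q_k$. The only difference is how the visits are counted. The paper reads off directly, from the fact that the walk runs from $0$ to $n$, that each height $h$ is visited $1+\varepsilon_h+\varepsilon_{h+1}$ times, where $\varepsilon_h$ is the number of backsteps from $h$. Your induction on $p$, deleting the last backstep together with the up-step immediately after it (both touch only interior heights), is an equally valid way to obtain this count and makes your vaguer telescoping remark unnecessary.
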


\begin{proof}
Between backsteps the walk only ascends. After its backstep from $k_i$ it is
at height $k_i-1$, ascends to $k_{i+1}\ge k_i-1$, and steps down again, so the
sequence $(k_i)$ determines the walk. The first and last steps ascend, so
each backstep joins the two interior heights $k_i$ and $k_i-1$ in
$\{1,\ldots,n-1\}$; hence $2\le k_i\le n-1$, and conversely every such
sequence defines an admissible walk. Since the walk starts at $0$ and ends at $n$, each height
$h\in\{1,\ldots,n-1\}$ is visited $1+\varepsilon_h+\varepsilon_{h+1}$ times, where $\varepsilon_h$ is the
number of backsteps from $h$. Thus, relative to the monotone walk, each
backstep from $k$ adds two steps and one visit at each of $k-1$ and $k$,
that is, the factor $a^2/(d_{k-1}d_k)=a^2q_k$.
\end{proof}

By Lemma~\ref{lem:backsteps}, the one-backstep walks are indexed by
$2\le k\le n-1$, so $\sigma_1/\sigma_0=a^2Q_n$, as in
\cite{AnahtarciDjakov2012}. In general, $\sigma_p/\sigma_0$ is $a^{2p}$ times
the sum of $q_{k_1}\cdots q_{k_p}$ over the sequences $k_1,\ldots,k_p$ of the
lemma. The classes with $p\ge2$ are not evaluated separately: the next
proposition sums all classes at once.

\subsection{The backstep factor as a determinant}\label{subsec:logdet}

Let $\mathcal D(z)=\operatorname{diag}(d_1(z),\dots,d_{n-1}(z))$, with $d_k$ as
in \eqref{eq:dk}, and let $\mathcal A$ be the adjacency matrix of the path
graph on $\{1,\dots,n-1\}$; on its edge $\{k-1,k\}$, the two entries of
$\mathcal D(z)^{-1}\mathcal A$ have product $q_k(z)$. For
$r\ge1$, put
$T_r(z):=\frac1{2r}\operatorname{tr}
\bigl[(\mathcal D(z)^{-1}\mathcal A)^{2r}\bigr]$.
In $\operatorname{tr}[(\mathcal D^{-1}\mathcal A)^{2r}]$, a closed walk
$v_0\to v_1\to\cdots\to v_{2r}=v_0$ contributes $\prod_{i<2r}d_{v_i}^{-1}$.
Pairing each step $k-1\to k$ with a step $k\to k-1$ turns this product into a
product of factors $q_k$, one for each pair, over consecutive edges. On one
edge there are $2$ closed walks of each even length. For a fixed pair of
adjacent edges, the walks using both edges number $4$ at length four, each of
weight $q_{k-1}q_k$, and $12$ at length six, half of weight $q_{k-1}^2q_k$ and
half of weight $q_{k-1}q_k^2$. For three consecutive edges, $6$ walks of
length six use all three. Dividing by $2r$ gives
\begin{equation}\label{eq:T-local}
\begin{aligned}
T_1&=Q_n,\qquad
T_2=\frac12\sum_{k=2}^{n-1}q_k^2+\sum_{k=3}^{n-1}q_{k-1}q_k,\\
T_3&=\frac13\sum_{k=2}^{n-1}q_k^3+\sum_{k=3}^{n-1}q_{k-1}q_k(q_{k-1}+q_k)
+\sum_{k=4}^{n-1}q_{k-2}q_{k-1}q_k.
\end{aligned}
\end{equation}

\begin{proposition}[Logarithmic determinant and comparability]\label{lem:logdet}
For $n\ge n_0$ and $|z|\le1$,
\begin{equation}\label{eq:logdet}
\frac{\beta_n(z)}{\sigma_0(n,z)}=\frac{\det\mathcal D(z)}{\det(\mathcal D(z)-a\mathcal A)},
\qquad
\log\frac{\beta_n(z)}{\sigma_0(n,z)}=\sum_{r\ge1}a^{2r}T_r(z).
\end{equation}
Moreover, there is $C>0$ such that $|T_r(z)|\le C^rn^{-2r}$ for all $r\ge1$
and $C^{-1}M_n\le|\beta_n(z)|\le CM_n$. Hence
$\sum_{r\ge4}|a|^{2r}\sup_{|z|\le1}|T_r(z)|=O(n^{-8})$, and
$\sup_{|z|\le\rho}|\beta_n^{(k)}(z)|=O(M_n)$ for fixed $0<\rho<1$ and $k\ge1$.
\end{proposition}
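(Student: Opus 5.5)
Our plan is to identify $\beta_n/\sigma_0$ as the $(1,n-1)$ entry of a resolvent on the path graph, and to expand its determinant as a sum over closed walks. Work in the height coordinate $u\in\{1,\dots,n-1\}$ with $\mathcal D=\mathcal D(z)$ and $\mathcal A$ as above.

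\textbf{Step 1: the matrix identity.} An off-diagonal walk from $0$ to $n$ is a step $0\to1$, then a walk $1=v_1\to\cdots\to v_{\ell-1}=n-1$ inside $\{1,\dots,n-1\}$, then a step $n-1\to n$. Its weight is $a^\ell\prod_s d_{v_s}^{-1}$. Summing over all such walks gives
\[
\beta_n(z)=a^2\sum_{m\ge0}\bigl[(\mathcal D^{-1}(a\mathcal A))^m\mathcal D^{-1}\bigr]_{1,n-1}=a^2\bigl[(\mathcal D-a\mathcal A)^{-1}\bigr]_{1,n-1}.
\]
The Neumann series converges because $\|a\mathcal D^{-1}\mathcal A\|\le 2|a|/(4(n-1)-1)<1$, which is the same condition as \eqref{eq:H}. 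By Cramer's rule, the $(1,n-1)$ cofactor of the tridiagonal matrix $\mathcal D-a\mathcal A$ equals $a^{n-2}$, since the relevant minor is triangular with off-diagonal entries $-a$ and the sign works out. Hence
\[
\beta_n=\frac{a^n}{\det(\mathcal D-a\mathcal A)}.
\]
Dividing by $\sigma_0=a^n/\det\mathcal D$ from \eqref{eq:sigma0} gives the first identity in \eqref{eq:logdet}.

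\textbf{Step 2: the logarithm.} Write $\det(\mathcal D-a\mathcal A)/\det\mathcal D=\det(I-aK)$ with $K=\mathcal D^{-1}\mathcal A$. Then
\[
-\log\det(I-aK)=\sum_{m\ge1}\frac{a^m}{m}\operatorname{tr}K^m.
\]
Odd traces vanish because the path graph is bipartite, which leaves $\sum_r a^{2r}T_r$. This requires $\|aK\|<1$, which holds since $\mathcal D^{-1}\mathcal A$ is finite-dimensional with norm $O(1/n)$. Continuity in $z$ then fixes the branch: the series is small and analytic, and it matches the principal logarithm.

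\textbf{Step 3: the bound $|T_r|\le C^rn^{-2r}$.} This is the main obstacle. The crude norm bound only gives $n\cdot(C/n)^{2r}$, which loses a factor $n$. Instead we use the walk structure from \eqref{eq:T-local}. Every closed walk of length $2r$ on the path is a product of $r$ factors $q_k$ over edges in a connected set of at most $r$ consecutive edges. The number of walks of length $2r$ based at a given starting edge is at most $4^r$. So
\[
|T_r|\le 4^r\sum_{k}\sup|q_k|\,\Bigl(\max_j \sup|q_j|\Bigr)^{r-1}.
\]
By \eqref{eq:Omega}, $\sum_k\sup|q_k|=O(n^{-2})$. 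The maximum satisfies $\max\sup|q_k|\le 1/(4(n-1)\cdot 2(n-2))=O(n^{-2})$, since $d_{k-1}d_k\ge 2\min\cdot$ and both factors are of order at least $n$ times their distance to the endpoints. This gives $C^rn^{-2r}$. The care needed is in checking that a walk of length $2r$ can be charged to one "anchor" edge (say its leftmost) with the other $r-1$ pair-factors each bounded by the maximum; the counting by at most $4^r$ walks per anchor makes this work.

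\textbf{Step 4: the consequences.} From Step 3, $\sum_r|a|^{2r}|T_r|=O(n^{-2})$, so $|\log(\beta_n/\sigma_0)|$ is bounded, and the tail from $r\ge4$ is $O(n^{-8})$. For $|z|\le1$, each factor satisfies $|d_k(z)/d_k(0)|=|1+z/D_k|$. Since $\sum_kD_k^{-1}=U_1=O(\log n/n)$, we get $\bigl|\log\prod_k(1+z/D_k)\bigr|=O(\log n/n)$. This makes $|\sigma_0(n,z)|$ comparable to $M_n$, and therefore $C^{-1}M_n\le|\beta_n|\le CM_n$. Finally, Cauchy's estimate on $|z|\le1$ gives $\sup_{|z|\le\rho}|\beta_n^{(k)}|=O(M_n)$.
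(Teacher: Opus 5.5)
Your proposal is correct. Steps 1, 2 and 4 match the paper's proof: the Neumann series for the path-graph resolvent, Cramer's rule with the triangular minor, the trace expansion of $\log\det(I-a\mathcal D^{-1}\mathcal A)$ with vanishing odd traces, comparison of $\sigma_0(n,z)$ with $M_n$ through $U_1\to0$, and Cauchy's estimate.

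The genuine difference is Step 3. The paper avoids combinatorics there. It passes to the real symmetric nonnegative matrix $\mathcal B=|\mathcal D|^{-1/2}\mathcal A|\mathcal D|^{-1/2}$ and uses $|\operatorname{tr}[(\mathcal D^{-1}\mathcal A)^{2r}]|\le\operatorname{tr}\mathcal B^{2r}\le\|\mathcal B\|^{2r-2}\operatorname{tr}\mathcal B^2$. Together with $\|\mathcal B\|\le(n-1)^{-1}$ and $\operatorname{tr}\mathcal B^2=2\sum_k|q_k|=O(n^{-2})$, this recovers the lost factor $n$ in one line and gives the explicit bound $|T_r|\le r^{-1}(n-1)^{2-2r}\sum_k\sup|q_k|$.

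Your anchoring argument does the same job by hand. You pair steps into factors $q_k$, the structure behind \eqref{eq:T-local}. You then charge the summable bound \eqref{eq:Omega} to the leftmost edge and the uniform bound $\max_k\sup|q_k|=O(n^{-2})$ to the remaining $r-1$ factors. This is more elementary and shows transparently why exactly one factor must be summed over $k$. The price is an exponential combinatorial constant and some counting care.

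On that counting: the number of closed walks of length $2r$ with a given leftmost edge is at most $(r+1)4^r$, not $4^r$, because the starting vertex can be any of up to $r+1$ vertices. Your displayed inequality $|T_r|\le4^r\sum_k\sup|q_k|\,(\max_j\sup|q_j|)^{r-1}$ nevertheless holds, since the prefactor $1/(2r)$ in $T_r$ absorbs the factor $r+1$.

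The uniform bound is cleanest as $|q_k(z)|\le(4(n-1)^2)^{-1}$, from $|d_k(z)|\ge2k(n-k)\ge2(n-1)$ on $|z|\le1$.
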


\begin{proof}
In the height coordinate of \S\ref{subsec:backsteps}, removing the first and
last steps of a walk from $-n$ to $n$ leaves a walk $u_0=1,\dots,u_m=n-1$ in
the path graph, of weight $a^2\cdot a^m\prod_{i=0}^md_{u_i}^{-1}$ by
\eqref{eq:fourier-walk-weight}. Since
$\|a\mathcal D^{-1}\mathcal A\|
\le 2|a|/(4(n-1)-|z|)<1$ on $|z|\le1$ for large $n$,
summing over walks gives
$\beta_n=a^2\bigl[\sum_{m\ge0}(a\mathcal D^{-1}\mathcal A)^m\mathcal D^{-1}\bigr]_{1,n-1}
=a^2\bigl[(\mathcal D-a\mathcal A)^{-1}\bigr]_{1,n-1}$. By Cramer's rule this
entry is $a^{n-2}/\det(\mathcal D-a\mathcal A)$, since deleting the last row
and the first column leaves a triangular matrix with diagonal entries $-a$,
and the cofactor sign cancels their minus signs; with \eqref{eq:sigma0} this
gives the first identity in \eqref{eq:logdet}. Hence
$\beta_n/\sigma_0=\det(I-a\mathcal D^{-1}\mathcal A)^{-1}
=\exp\sum_{m\ge1}\frac{a^m}m\operatorname{tr}[(\mathcal D^{-1}\mathcal A)^m]$,
and the traces with odd $m$ vanish because closed walks in a path graph have
even length.

For the bounds, put
$\mathcal B=|\mathcal D|^{-1/2}\mathcal A|\mathcal D|^{-1/2}$. Taking absolute
values in each closed walk gives
$|\operatorname{tr}[(\mathcal D^{-1}\mathcal A)^{2r}]|\le\operatorname{tr}\mathcal B^{2r}
\le\|\mathcal B\|^{2r-2}\operatorname{tr}\mathcal B^2$. On $|z|\le1$ we have
$|d_k(z)|\ge2k(n-k)\ge2(n-1)$, so $\|\mathcal B\|\le(n-1)^{-1}$, and
$\operatorname{tr}\mathcal B^2=2\sum_k|q_k|=O(n^{-2})$ by \eqref{eq:Omega}.
This proves the bound on $T_r$. For large $n$ we have
$C|a|^2n^{-2}\le\frac12$, so summing the geometric series gives
$\sum_{r\ge4}|a|^{2r}\sup_{|z|\le1}|T_r(z)|\le2C^4|a|^8n^{-8}$. The bound on
$T_r$ also gives $\sum_{r\ge1}a^{2r}T_r(z)=O(n^{-2})$, uniformly on $|z|\le1$.
Hence
$\beta_n/\sigma_0=1+O(n^{-2})$, and for large $n$ the series is the principal
logarithm of this quotient, which gives the second identity in
\eqref{eq:logdet}. Since
$\sigma_0(n,z)/\sigma_0(n,0)=\prod_k(1+z/D_k)^{-1}$ and $U_1\to0$, also
$|\sigma_0(n,z)|\asymp M_n$; this gives the bounds on $|\beta_n|$. Finally,
$\beta_n$ is analytic on a neighbourhood of $|z|\le1$ (Section~\ref{sec:LS}),
so Cauchy's estimate on circles of radius $1-\rho$ gives
$\sup_{|z|\le\rho}|\beta_n^{(k)}(z)|\le k!(1-\rho)^{-k}CM_n$.
\end{proof}

\subsection{The off-diagonal entry at the centre}

We expand the logarithms of the monotone-walk ratio
$\sigma_0(n,z_n^0)/\sigma_0(n,0)$ and the backstep factor
$\beta_n(z_n^0)/\sigma_0(n,z_n^0)$ in powers of $a^2$, with coefficients
\begin{equation}\label{eq:m-nu-def}
\begin{aligned}
m_1&:=-\xi_1U_1,\qquad m_2:=-\xi_2U_1+\tfrac12\xi_1^2U_2,\qquad
m_3:=-\xi_3U_1+\xi_1\xi_2U_2-\tfrac13\xi_1^3U_3,\\
\nu_1&:=Q_n(0),\qquad \nu_2:=\xi_1Q_n'(0)+T_2(0),\qquad
\nu_3:=\xi_2Q_n'(0)+\tfrac12\xi_1^2Q_n''(0)+\xi_1T_2'(0)+T_3(0),
\end{aligned}
\end{equation}
where $\xi_1,\xi_2,\xi_3$ are as in \eqref{eq:xi}.

\begin{proposition}[Off-diagonal entry at the centre]\label{prop:beta}
For large $n$,
\begin{align}
\log\frac{\sigma_0(n,z_n^0)}{\sigma_0(n,0)}&=m_1a^2+m_2a^4+m_3a^6+O(n^{-8}),
\label{eq:monotone-trunc}\\
\log\frac{\beta_n(z_n^0)}{\sigma_0(n,z_n^0)}&=\nu_1a^2+\nu_2a^4+\nu_3a^6+O(n^{-8}),
\label{eq:backstep-trunc}
\end{align}
and $m_r,\nu_r=O(n^{-2r})$ for $r=1,2,3$.
\end{proposition}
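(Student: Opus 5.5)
For \eqref{eq:monotone-trunc} we compute the logarithm directly. By \eqref{eq:sigma0},
\[
\log\frac{\sigma_0(n,z)}{\sigma_0(n,0)}=-\sum_{k=1}^{n-1}\log\Bigl(1+\frac{z}{D_k}\Bigr)
=\sum_{j\ge1}\frac{(-1)^j}{j}\,z^jU_j ,
\]
valid for $|z|\le1$ because $D_k\ge4(n-1)$. We evaluate this at $z=z_n^0$.

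We first bound the $U_j$. Since $D_k\ge 2n\min(k,n-k)$, we have $U_1=H_{n-1}/(2n)=O(n^{-1}\log n)$. For $j\ge2$, $U_j\le 2\sum_{k\ge1}(2nk)^{-j}=O(n^{-j})$. By Lemma~\ref{lem:center}, $z_n^0=w+O(n^{-8})$ with $w=\xi_1a^2+\xi_2a^4+\xi_3a^6$ and $\xi_r=O(n^{-2r})$.

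Replacing $z_n^0$ by $w$ in the $j=1$ term costs $O(n^{-8})U_1=O(n^{-9}\log n)$. The terms with $j\ge2$ are harmless, and the terms with $j\ge4$ are $O(n^{-8}n^{-4})$. Expanding $-wU_1+\tfrac12w^2U_2-\tfrac13w^3U_3$ in powers of $a^2$ and keeping powers up to $a^6$ gives exactly $m_1,m_2,m_3$. The discarded cross terms are of sizes such as $\xi_1\xi_3U_2=O(n^{-10})$, $\xi_2^2U_2=O(n^{-10})$ and $\xi_1^2\xi_2U_3=O(n^{-11})$, all $O(n^{-8})$.

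The bounds $m_r=O(n^{-2r})$ then follow, and are in fact slightly better, for example $m_1=O(n^{-3}\log n)$. In every case we only use $\xi_r=O(n^{-2r})$ together with $U_1=O(1)$ and $U_j=O(n^{-j})$.

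For \eqref{eq:backstep-trunc} we use Proposition~\ref{lem:logdet}, which gives
\[
\log\frac{\beta_n(z_n^0)}{\sigma_0(n,z_n^0)}=a^2T_1(z_n^0)+a^4T_2(z_n^0)+a^6T_3(z_n^0)+O(n^{-8}),
\]
with $T_1=Q_n$. What remains is to Taylor-expand each $T_r$ about $0$: $T_1$ to second order, $T_2$ to first order and $T_3$ to zeroth order.

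This needs derivative bounds of the form
\[
\sup_{|z|\le\rho}\bigl|T_r^{(k)}(z)\bigr|=O(n^{-2r}).
\]
They follow from the uniform bound $|T_r(z)|\le C^rn^{-2r}$ on $|z|\le1$ in Proposition~\ref{lem:logdet}, combined with Cauchy's estimate, exactly as for \eqref{eq:fg-bounds}.

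We then substitute $z_n^0=w+O(n^{-8})$. The replacement error is $O(n^{-8})\cdot O(n^{-2})$. The Taylor remainders are $O(w^3n^{-2})$, $O(w^2n^{-4})$ and $O(wn^{-6})$, all $O(n^{-8})$.

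Collecting powers of $a^2$ then gives:
\begin{itemize}
\item at order $a^2$, the term $Q_n(0)$;
\item at order $a^4$, the terms $\xi_1Q_n'(0)+T_2(0)$;
\item at order $a^6$, the terms $\xi_2Q_n'(0)+\tfrac12\xi_1^2Q_n''(0)+\xi_1T_2'(0)+T_3(0)$.
\end{itemize}
These are precisely $\nu_1,\nu_2,\nu_3$ as defined in \eqref{eq:m-nu-def}. The leftover cross terms, such as $\xi_3Q_n'(0)$, are $O(n^{-8})$. The bounds $\nu_r=O(n^{-2r})$ follow from the same estimates.

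The only delicate point is bookkeeping: every discarded product must carry total $n$-order at least $8$. This is guaranteed because each $a^2$ comes with at least $n^{-2}$, whether through $\xi_r$, through $T_r^{(k)}$, or through $z^jU_j$ with $j\ge2$. The single exception is $U_1$, which is $O(1)$ rather than $O(n^{-2})$. That case is handled because $U_1$ appears only multiplied by $z_n^0$, whose truncation error is $O(n^{-8})$ and so still leaves an $O(n^{-8})$ contribution. The remaining identification of coefficients is routine.
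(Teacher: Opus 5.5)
Your proposal is correct and follows the paper's proof essentially step for step. For the monotone-walk ratio you use the series $-\sum_k\log(1+z/D_k)$ with $U_1=O(n^{-1}\log n)$ and $U_j=O(n^{-j})$. For the backstep factor you use the truncation $\sum_{r\le3}a^{2r}T_r(z_n^0)$ from Proposition~\ref{lem:logdet}, Cauchy-estimate bounds on the derivatives of $Q_n,T_2,T_3$, replacement of $z_n^0$ by $w$ at cost $O(n^{-10})$, and Taylor expansion to orders $2,1,0$.
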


\begin{proof}
Put $w:=\xi_1a^2+\xi_2a^4+\xi_3a^6=O(n^{-2})$, so that $z_n^0=w+O(n^{-8})$ by
\eqref{eq:center-trunc}. Since $D_k\ge2n\min(k,n-k)$, we have
$U_1=O(n^{-1}\log n)$ and $U_j=O(n^{-j})$ for $j\ge2$. By \eqref{eq:T-local},
Proposition~\ref{lem:logdet} and Cauchy's estimate, $Q_n^{(j)}=O(n^{-2})$,
$T_2^{(j)}=O(n^{-4})$ and $T_3^{(j)}=O(n^{-6})$ on $|z|\le\frac12$.

\emph{Monotone-walk ratio.} By \eqref{eq:sigma0}, the left side of
\eqref{eq:monotone-trunc} is $-\sum_k\log(1+z_n^0/D_k)$. For $|z|\le\frac12$,
the derivative of $-\sum_k\log(1+z/D_k)$ is $O(U_1)$. Replacing $z_n^0$ by $w$
therefore costs $O(n^{-9}\log n)$, and
$-\sum_k\log(1+w/D_k)=-wU_1+\frac12w^2U_2-\frac13w^3U_3+O(n^{-12})$. By
\eqref{eq:m-nu-def}, the terms of order $a^2$, $a^4$, $a^6$ are
$m_1a^2$, $m_2a^4$, $m_3a^6$, and the rest are $O(n^{-8})$.

\emph{Backstep factor.} By Proposition~\ref{lem:logdet} at $z=z_n^0$ and
\eqref{eq:T-local}, the left
side of \eqref{eq:backstep-trunc} is
$a^2Q_n(z_n^0)+a^4T_2(z_n^0)+a^6T_3(z_n^0)+O(n^{-8})$, and replacing $z_n^0$ by
$w$ costs $O(n^{-10})$. When $Q_n$, $T_2$ and $T_3$ are expanded about $0$ to orders
$2$, $1$ and $0$, the terms of order $a^2$, $a^4$, $a^6$ are
$\nu_1a^2$, $\nu_2a^4$, $\nu_3a^6$ by \eqref{eq:m-nu-def}, and the rest are
$O(n^{-8})$.

The bounds on $m_r$ and $\nu_r$ follow from those on $\xi_r$, $U_j$, $Q_n$,
$T_2$ and $T_3$.
\end{proof}

\section{Proof of the main theorem and isospectral extension}\label{sec:proof}

We first express the splitting through the off-diagonal entry and inverse
slope at the centre $z_n^0$ of Lemma~\ref{lem:center}
(\S\ref{subsec:root}), then combine their expansions from
Sections~\ref{sec:diagonal} and~\ref{sec:offdiag} to prove
Theorem~\ref{thm:main} (\S\ref{subsec:completion}).
The isospectral extension to $ce^{-2ix}+de^{2ix}$ with $a^2=cd\ne0$
is proved in \S\ref{subsec:asymmetric}.

\subsection{Root separation}\label{subsec:root}

By \eqref{eq:det}, $\Delta_n=(F_n-\beta_n)(F_n+\beta_n)$. We locate one root
of each factor and expand both about $z_n^0$, where $F_n$ vanishes.

\begin{proposition}[Root separation]\label{prop:root}
For large $n$, each equation
$F_n(z)=\pm\beta_n(z)$ has a unique solution $z_n^\pm$ in $|z|\le\frac12$.
These solutions satisfy
\[
z_n^\pm=z_n^0+O(M_n),\qquad \tfrac12(z_n^++z_n^-)=z_n^0+O(M_n^2),
\]
and
\begin{equation}\label{eq:root-reduction}
z_n^+-z_n^-=\frac{2\beta_n(z_n^0)}{F_n'(z_n^0)}\bigl[1+O(M_n^2)\bigr].
\end{equation}
\end{proposition}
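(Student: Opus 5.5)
Both equations $F_n(z)=\pm\beta_n(z)$ are fixed-point problems, and I would treat them exactly as Lemma~\ref{lem:center} treats $F_n=0$. Write $F_n(z)\mp\beta_n(z)=0$ as $z=\alpha_n(z)\pm\beta_n(z)=:\Phi_\pm(z)$. On $|z|\le\frac12$, Proposition~\ref{prop:diagonal-tail-uniform} with $\rho=\frac12$ gives $\alpha_n,\alpha_n'=O(n^{-2})$. Proposition~\ref{lem:logdet} gives $|\beta_n|\le CM_n$ and $\beta_n'=O(M_n)$ on $|z|\le\rho$. For large $n$, $\Phi_\pm$ therefore maps $\{|z|\le\frac12\}$ into itself and is a $\frac12$-contraction, so each has a unique fixed point $z_n^\pm$. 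To get the bound on $\beta_n'$ on the full disc $|z|\le\frac12$, I would apply Cauchy's estimate on the disc of radius $1$, where $\beta_n$ is analytic, with $\rho=\frac12$. The contraction inequality
\[
|z_n^\pm-z_n^0|\le\tfrac12|z_n^\pm-z_n^0|+|\beta_n(z_n^\pm)|
\]
then gives $z_n^\pm=z_n^0+O(M_n)$.

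Next I would Taylor-expand about $z_n^0$. Put $\delta_\pm:=z_n^\pm-z_n^0=O(M_n)$. Using $F_n(z_n^0)=0$ and the bounds on $F_n''$ and $\beta_n'$ (Cauchy estimates on a slightly smaller disc, valid since $\delta_\pm\to0$), the equation $F_n(z_n^\pm)=\pm\beta_n(z_n^\pm)$ becomes
\[
F_n'(z_n^0)\delta_\pm+O(\delta_\pm^2)=\pm\bigl[\beta_n(z_n^0)+O(M_n\delta_\pm)\bigr].
\]
Since $F_n'(z_n^0)=1+O(n^{-2})$ by Proposition~\ref{prop:alpha-derivative}, this gives $\delta_\pm=\pm\beta_n(z_n^0)/F_n'(z_n^0)+O(M_n^2)$. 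Subtracting the two cases yields $z_n^+-z_n^-=2\beta_n(z_n^0)/F_n'(z_n^0)+O(M_n^2)$. Adding them yields $\delta_++\delta_-=O(M_n^2)$, which is the statement about the midpoint.

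To reach the relative form \eqref{eq:root-reduction}, I would divide by the leading term. Proposition~\ref{lem:logdet} gives $|\beta_n(z_n^0)|\ge C^{-1}M_n$, so the leading term has modulus $\asymp M_n$. The absolute error $O(M_n^2)$ then becomes a relative error $O(M_n)$, not $O(M_n^2)$.

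The main obstacle is raising this to a relative error $O(M_n^2)$, which needs one more order of cancellation. I would exploit the symmetry $\delta_+\leftrightarrow-\delta_-$. Set $G(z):=F_n(z)/\beta_n(z)$, which is analytic near $z_n^0$ because $\beta_n\ne0$ there by the lower bound in Proposition~\ref{lem:logdet}. Then $z_n^\pm$ solve $G(z)=\pm1$. $G$ vanishes at $z_n^0$ with $G'(z_n^0)=F_n'(z_n^0)/\beta_n(z_n^0)$ of size $\asymp M_n^{-1}$. Its inverse function $G^{-1}$ is analytic on $|w|\le2$ by a Rouché/Lagrange-inversion argument. Indeed, $G(z_n^0+\beta_n(z_n^0)\zeta/F_n'(z_n^0))=\zeta+O(M_n\zeta^2)$ uniformly for $|\zeta|\le3$, using the Cauchy bounds on $F_n''$ and $\beta_n'/\beta_n=O(1)$. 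Hence
\[
z_n^\pm=z_n^0+\frac{\beta_n(z_n^0)}{F_n'(z_n^0)}\,\psi(\pm1),
\]
with $\psi(\zeta)=\zeta+c_2\zeta^2+c_3\zeta^3+\cdots$ and $c_j=O(M_n^{j-1})$ by Cauchy on $|\zeta|\le3$. In the difference the even terms cancel, giving $\psi(1)-\psi(-1)=2(1+c_3+\cdots)=2(1+O(M_n^2))$, which is \eqref{eq:root-reduction}. In the sum only even terms survive, giving $O(M_n)\cdot M_n=O(M_n^2)$ for the midpoint. The delicate part is verifying the uniform bound $G(z_n^0+\beta\zeta/F')=\zeta(1+O(M_n\zeta))$ on a fixed $\zeta$-disc. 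The step size there is $O(M_n)$, so I would get it by Taylor's theorem with the derivative bounds already established.
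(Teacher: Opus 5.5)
There is a genuine gap, and it sits exactly at the step you identified as the main obstacle. Your contraction step and your first-order Taylor step are fine. They already give $z_n^\pm=z_n^0+O(M_n)$ and $\delta_++\delta_-=O(M_n^2)$, so the midpoint claim is proved.

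The problem is the inversion step that should upgrade the relative error from $O(M_n)$ to $O(M_n^2)$. The bound $c_j=O(M_n^{j-1})$ does not follow from Cauchy's estimate on $|\zeta|\le3$. Knowing only that $\psi$ is analytic on $|w|\le2$ with $\psi(w)-w=O(M_n)$ gives $|c_j|\le O(M_n)2^{-j}$, hence $\psi(1)-\psi(-1)=2(1+O(M_n))$, which is what you already had.

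In fact your uniform bound $\phi(\zeta):=G(z_n^0+\epsilon\zeta)=\zeta+O(M_n\zeta^2)$ on a fixed $\zeta$-disc, with $\epsilon=\beta_n(z_n^0)/F_n'(z_n^0)$, cannot suffice. The function $\phi(\zeta)=\zeta+M_n\zeta^3$ satisfies it, yet its inverse gives $\psi(1)-\psi(-1)=2\psi(1)=2(1-M_n+O(M_n^2))$. So the bound you flagged as delicate is not the real issue. What is needed is that the cubic and higher coefficients of $\phi$ are smaller, namely $a_j=O(M_n^{j-1})$, and that the odd tail of $\psi$ is summable at size $O(M_n^2)$.

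This is repairable. Since $|F_n|\le1$ and $|\beta_n|\ge C^{-1}M_n$, you have $|G|\le CM_n^{-1}$ on a fixed $z$-disc such as $|z-z_n^0|\le\frac14$. Cauchy there gives $a_j=\epsilon^jG^{(j)}(z_n^0)/j!=O(K^jM_n^{j-1})$. Equivalently, $\phi$ is controlled on a $\zeta$-disc of radius $\asymp M_n^{-1}$. Inverting after rescaling $\zeta$ by $M_n$ gives $|c_j|\le B(KM_n)^{j-1}$, so $c_3+c_5+\cdots=O(M_n^2)$.

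The paper avoids inversion altogether. It carries your Taylor step one order further, keeping $\frac12F_n''(z_n^0)\delta_\pm^2$ and $\beta_n'(z_n^0)\delta_\pm$ explicitly. The remainders are $O(n^{-2}|\delta_\pm|^3)+O(M_n|\delta_\pm|^2)=O(M_n^3)$. When the two equations are subtracted, the kept terms become $\frac12F_n''(z_n^0)(\delta_+-\delta_-)(\delta_++\delta_-)$ and $\beta_n'(z_n^0)(\delta_++\delta_-)$. Both are $O(M_n^3)$ by your own bound on $\delta_++\delta_-$. Hence $F_n'(z_n^0)(z_n^+-z_n^-)=2\beta_n(z_n^0)+O(M_n^3)$, and the lower bound $|\beta_n(z_n^0)|\ge C^{-1}M_n$ turns this into \eqref{eq:root-reduction}. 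This is the same odd/even cancellation your $\psi$ argument aims at, obtained directly from the $\pm$ structure.
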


\begin{proof}
Take $\rho=\frac12$ in Propositions~\ref{prop:diagonal-tail-uniform}
and~\ref{lem:logdet}: on $|z|\le\frac12$,
$\alpha_n^{(k)}=O(n^{-2})$ for $k\le3$ and $\beta_n^{(k)}=O(M_n)$ for $k\le2$.
Hence, as in the proof of Lemma~\ref{lem:center}, for large $n$ each map
$\alpha_n\pm\beta_n$ is a $\frac12$-contraction of $\{|z|\le\frac12\}$ into
itself, and its fixed point is the unique solution $z_n^\pm$ there. Put
$\delta_\pm:=z_n^\pm-z_n^0$. Since $z_n^0=\alpha_n(z_n^0)$ and $\alpha_n$ is a
$\frac12$-contraction, $|\delta_\pm|\le\frac12|\delta_\pm|+|\beta_n(z_n^\pm)|$,
so $\delta_\pm=O(M_n)$.

Expand $F_n$ about $z_n^0$ to second order and $\beta_n$ to first order. Since
$F_n(z_n^0)=0$, and the remainders $O(n^{-2}|\delta_\pm|^3)$ and
$O(M_n|\delta_\pm|^2)$ are $O(M_n^3)$, the equations
$F_n(z_n^\pm)=\pm\beta_n(z_n^\pm)$ become
\[
F_n'(z_n^0)\,\delta_\pm+\tfrac12F_n''(z_n^0)\,\delta_\pm^2
=\pm\bigl(\beta_n(z_n^0)+\beta_n'(z_n^0)\,\delta_\pm\bigr)+O(M_n^3),
\]
where $|F_n'(z_n^0)|\ge\frac12$, $F_n''=-\alpha_n''=O(n^{-2})$ and
$\beta_n'=O(M_n)$. Adding the two equations gives
$F_n'(z_n^0)(\delta_++\delta_-)=O(M_n^2)$, so $\delta_++\delta_-=O(M_n^2)$.
Subtracting them, and using this bound in the terms
$\beta_n'(z_n^0)(\delta_++\delta_-)$ and
$\frac12F_n''(z_n^0)(\delta_+-\delta_-)(\delta_++\delta_-)$, gives
$F_n'(z_n^0)(z_n^+-z_n^-)=2\beta_n(z_n^0)+O(M_n^3)$. Since
$|\beta_n(z_n^0)|\ge C^{-1}M_n$ by Proposition~\ref{lem:logdet}, this is
\eqref{eq:root-reduction}.
\end{proof}

\subsection{Proof of Theorem~\ref{thm:main}}\label{subsec:completion}

We first use root separation and the eigenvalue count to show that
$n^2+z_n^\pm$ are the only eigenvalues in $|\lambda-n^2|<1$ and are
algebraically simple, and then combine the logarithmic expansions to obtain
\eqref{eq:main-refined}.

\begin{proof}[Proof of Theorem~\ref{thm:main}]
Since $\beta_n(z_n^0)\ne0$, \eqref{eq:root-reduction} gives
$z_n^+\ne z_n^-$, so by \eqref{eq:eig-equiv} with $R=\frac12$, $n^2+z_n^+$ and $n^2+z_n^-$
are two distinct eigenvalues in $|\lambda-n^2|\le\frac12$. By
\cite[Lemma~1]{AnahtarciDjakov2012}, the disc $|\lambda-n^2|<1$ contains
exactly two eigenvalues, counted with algebraic multiplicity. Hence
$n^2+z_n^\pm$ are the only eigenvalues in that disc, both are algebraically
simple, and $\gamma_n=\pm(z_n^+-z_n^-)$.

Dividing \eqref{eq:root-reduction} by $2\sigma_0(n,0)\ne0$, and using that
$M_n^2=O(n^{-8})$ because $M_n$ decays factorially, we obtain
\[
\frac{z_n^+-z_n^-}{2\sigma_0(n,0)}
=\frac{1}{F_n'(z_n^0)}\cdot
\frac{\sigma_0(n,z_n^0)}{\sigma_0(n,0)}\cdot
\frac{\beta_n(z_n^0)}{\sigma_0(n,z_n^0)}\cdot\bigl[1+O(n^{-8})\bigr].
\]
By \eqref{eq:inverse-alpha}, \eqref{eq:monotone-trunc} and
\eqref{eq:backstep-trunc}, the three factors have logarithms of size
$O(n^{-2})$. Adding them gives, with the principal logarithm on the left,
\begin{equation}\label{eq:log-assembly}
\log\frac{z_n^+-z_n^-}{2\sigma_0(n,0)}=c_1a^2+c_2a^4+c_3a^6+O(n^{-8}),
\qquad c_r:=s_r+m_r+\nu_r .
\end{equation}
By Lemma~\ref{lem:coefficients}, $c_1(n)=-(n+2)/[4(n^2-1)^2]$,
$c_2(n)=-15/(128n^7)+O(n^{-8})$ and $c_3(n)=O(n^{-8})$. Since
$2\sigma_0(n,0)=8(a/4)^n/[(n-1)!]^2$ by \eqref{eq:sigma0} and
$\gamma_n=\pm(z_n^+-z_n^-)$, this proves \eqref{eq:main-refined}. Its exponent
is $O(n^{-3})$; expanding the exponential to second order, with
\[
c_1(n)=-\frac{1}{4n^3}-\frac{1}{2n^4}-\frac{1}{2n^5}-\frac{1}{n^6}-\frac{3}{4n^7}+O(n^{-8}),
\qquad
\tfrac12c_1(n)^2=\frac{1}{32n^6}+\frac{1}{8n^7}+O(n^{-8}),
\]
gives \eqref{eq:main-expanded}.

For real $a$, $L$ is self-adjoint, and by the min--max principle its ordered
eigenvalues differ from those of $-d^2/dx^2$, under the same boundary
conditions, by at most $\|2a\cos2x\|_\infty=2|a|$. The free value $n^2$
occurs exactly twice, at the positions of the edges of the $n$th gap
\cite{MagnusWinkler1966}; every other eigenvalue lies within $2|a|$ of a free
value $k^2\ne n^2$, hence at distance at least $4(n-1)-2|a|>1$ from $n^2$ for
large $n$. So $n^2+z_n^\pm$ are the edges of the $n$th gap. If
$\lambda_n^-<\lambda_n^+$, then $\gamma_n$ and the prefactor in
\eqref{eq:main-refined} are real, so the exponential factor is real and, being
$1+O(n^{-3})$, positive. Taking absolute values in \eqref{eq:main-refined}
gives the sign $+$ and $(|a|/4)^n$ in place of $(a/4)^n$.
\end{proof}

Lemma~\ref{lem:coefficients} evaluates $c_1$ and $c_2$ exactly, showing that
the harmonic sums cancel, and bounds $c_3$. At each order $a^{2r}$,
$r=1,2,3$, the leading term
$-2/(r^2(16n^2)^r)$ of the slope coefficient $s_r$ cancels the leading term
$2/(r^2(16n^2)^r)$ of $T_r(0)$. The latter comes from the two ends of the
path: at the left end, $q_k(0)\sim[16n^2(k-1)k]^{-1}$ as $n\to\infty$ for each
fixed $k\ge2$, and the right end follows by reflection; for $r=2,3$ the
constant $\pi^2$ cancels in these end sums. Further cancellations at orders $n^{-5}$
and $n^{-6}$ (for instance, the term $1/(64n^5)$ of $T_2(0)$ against
$\xi_1Q_n'(0)$) give $c_2=O(n^{-7})$; at order $a^6$, the terms of order
$n^{-7}$ of $T_3(0)$ and $\xi_1T_2'(0)$ cancel, giving $c_3=O(n^{-8})$.

\subsection{Asymmetric two-term potential}\label{subsec:asymmetric}

The family $ce^{-2ix}+de^{2ix}$ contains the Mathieu potential when
$c=d=a$; the corresponding operator is self-adjoint exactly when
$d=\bar c$.

\begin{corollary}[Asymmetric two-term potential]\label{cor:asymmetric}
For $c,d\in\C$, let $\tilde L y=-y''+(ce^{-2ix}+de^{2ix})y$ on $[0,\pi]$,
with periodic boundary conditions for even $n$ and anti-periodic ones for
odd $n$.

\emph{(a)} If $cd\ne0$, then for large $n$ the operator $\tilde L$ has exactly
two eigenvalues $\lambda_n^-,\lambda_n^+$ in $|\lambda-n^2|<1$, both
algebraically simple, and
\begin{equation}\label{eq:asymmetric}
\gamma_n=\lambda_n^+-\lambda_n^-
=\pm\frac{8(\sqrt{cd}/4)^n}{[(n-1)!]^2}
\exp\!\left[-\frac{cd\,(n+2)}{4(n^2-1)^2}-\frac{15(cd)^2}{128n^7}+O(n^{-8})\right],
\end{equation}
where $\sqrt{cd}$ is either square root of $cd$. The implied constant depends
only on $cd$ and, for each $n$, the sign depends on the labelling of the pair
and on the choice of square root. If $cd>0$, both eigenvalues are real, and if
$\lambda_n^-<\lambda_n^+$, then \eqref{eq:asymmetric} holds with the sign $+$
and the positive square root.

\emph{(b)} If $cd=0$, then for every $n\ge1$, $n^2$ is an eigenvalue of
$\tilde L$ of algebraic multiplicity two, and $\gamma_n=0$.
\end{corollary}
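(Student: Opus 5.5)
For part (a), I will reduce $\tilde L$ to the Mathieu operator with $a=\sqrt{cd}$ through the isospectrality theorem of Veliev \cite{Veliev2013Isospectral}. The theorem says that, for each boundary condition, the periodic and anti-periodic spectra of $-y''+(ce^{-2ix}+de^{2ix})y$ depend only on the product $cd$, with algebraic multiplicities included. I would also check this directly in the Fourier picture of Section~\ref{sec:LS}.

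In the basis $e^{ikx}$, $k\in n+2\mathbb Z$, the potential sends $e^{ikx}$ to $ce^{i(k-2)x}+de^{i(k+2)x}$, so $\tilde L$ is an infinite tridiagonal matrix. Its diagonal entries are $k^2$, its superdiagonal entries are $c$ and its subdiagonal entries are $d$ (or the reverse, depending on convention). Choose any $\mu$ with $\mu^2=c/d$ and conjugate by the diagonal similarity $e^{ikx}\mapsto\mu^{k/2}e^{ikx}$, with the exponent measured from a fixed base index. This turns both off-diagonal entries into $\sqrt{cd}$, which is the matrix of the Mathieu operator with $a=\sqrt{cd}$. The catch is that the diagonal rescaling is unbounded, so it is not a genuine similarity on $L^2$.

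To deal with this, I would not use the conjugation on the whole space. Instead I would run the Lyapunov--Schmidt reduction of Section~\ref{sec:LS} for $\tilde L$ directly. Along an admissible walk, each up-step carries weight $d$ and each down-step weight $c$. A diagonal walk has equally many up- and down-steps, so its weight is $(cd)^{p}$ times the same denominators; hence $\tilde\alpha_n=\alpha_n$ with $a^2=cd$. A walk from $-n$ to $n$ with $p$ backsteps has weight $d^{n+p}c^p$. So $\tilde\beta_{12}=d^n\cdot(\text{Mathieu sum with }a^2=cd)/a^n$, and the other off-diagonal entry is $\tilde\beta_{21}=c^n(\cdot)/a^n$. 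The product $\tilde\beta_{12}\tilde\beta_{21}=\beta_n^2$ with $a=\sqrt{cd}$, so $\det\tilde S_n(z)=(z-\alpha_n)^2-\beta_n^2$ coincides with $\Delta_n(z)$.

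With that identity, Propositions~\ref{prop:root} and the eigenvalue count \cite[Lemma~1]{AnahtarciDjakov2012} apply verbatim; that count depends only on $\|V\|$ being bounded. They give two distinct simple eigenvalues $n^2+z_n^\pm$, equal to those of the Mathieu operator with $a=\sqrt{cd}$. Theorem~\ref{thm:main} then gives \eqref{eq:asymmetric}. The choice of square root only flips the sign through $(\pm1)^n$, and the implied constant depends on $cd$ alone. If $cd>0$, the eigenvalues coincide with those of the self-adjoint Mathieu operator with real $a=\sqrt{cd}$, so they are real and the sign statement follows from the real case of the theorem.

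For part (b), suppose $d=0$ (the case $c=0$ is symmetric). Then the matrix is triangular: the potential only lowers Fourier indices. The spectral problem near $n^2$ then has $\tilde\beta_{12}=0$ or $\tilde\beta_{21}=0$, and $\tilde\alpha_n=0$ because no closed walks exist. So $\det\tilde S_n(z)=z^2$ and $n^2$ is an eigenvalue.

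The main obstacle is algebraic multiplicity two for every $n\ge1$, not just for large $n$. For this I would use the Floquet discriminant. Writing $y=\sum_k y_ke^{ikx}$, the solutions of $-y''+ce^{-2ix}y=\lambda y$ are power series in $e^{-2ix}$ of the form $e^{\pm i\sqrt\lambda x}(1+O(e^{-2ix}))$. These give the monodromy multipliers $e^{\pm i\pi\sqrt\lambda}$, so the discriminant is $2\cos(\pi\sqrt\lambda)$, the same as for $c=0$. Hence the periodic and anti-periodic characteristic functions $\Delta(\lambda)\mp2$ equal those of the free operator. Their zeros at $n^2$ are double for $n\ge1$, and for a Hill operator algebraic multiplicity equals the order of the zero. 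This gives $\gamma_n=0$. If the power-series ansatz has a resonance at $\lambda=n^2$, I would handle that case by continuity in $\lambda$, since the discriminant is entire.
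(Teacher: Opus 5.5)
Your proposal is correct, but both parts take a different route from the paper.

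\textbf{Part (a).} The paper uses Veliev's theorem as a black box: $\tilde L$ and $L$ (with $a^2=cd$) have the same periodic and anti-periodic eigenvalues for every $n$. Theorem~\ref{thm:main} then transfers at once, and the count of \cite[Lemma~1]{AnahtarciDjakov2012} for $\tilde L$ gives simplicity. Your walk-counting check makes the external theorem unnecessary. Closed walks have equally many up- and down-steps, and walks from $-n$ to $n$ (resp.\ $n$ to $-n$) have exactly $n$ surplus up-steps (resp.\ down-steps). Hence the reduced matrix satisfies $\tilde S_n=\operatorname{diag}(s,1)\,S_n\,\operatorname{diag}(s,1)^{-1}$ with $s=(d/a)^n$, so $\det\tilde S_n=\Delta_n$. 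The unbounded diagonal similarity you rightly rejected survives only as a harmless $2\times2$ conjugation. This local isospectrality in the disc is all the corollary needs, and it keeps the argument inside the paper's own machinery. Your threshold $n_0$ depends on $|c|+|d|$ through the analogue of \eqref{eq:H} for $\tilde V$; this is harmless, since the statement only ties the implied constant to $cd$. The paper's route is shorter and gives isospectrality for all $n$ at once.

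\textbf{Part (b).} The paper runs a homotopy $L_t$. The one-sided recursion on Fourier coefficients confines the spectrum of each $L_t$ to $\{k^2\}$, and norm continuity of the Riesz projection for $|\lambda-n^2|=1$ keeps its rank equal to $2$. You instead compute the whole Hill discriminant, $\Delta(\lambda)=2\cos\pi\sqrt\lambda$, from one-sided Floquet solutions. Their coefficient recursion $a_m=-c\,a_{m-1}/(4m(m-\nu))$ is solvable for $\nu=\pm\sqrt\lambda\notin\mathbb Z$, and analyticity of $\Delta$ covers the remaining $\lambda$. You then invoke the standard fact that for Hill operators the algebraic multiplicity equals the order of the zero of $\Delta\mp2$. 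Your route gives the stronger conclusion that $\tilde L$ is isospectral with the free operator, multiplicities included. It does, however, lean on that multiplicity theorem for non-self-adjoint Hill operators, whereas the paper's argument uses only compact resolvent and Riesz projections. Your preliminary remark $\det\tilde S_n(z)=z^2$ holds only under the reduction's hypotheses: large $n$, and $n\ge2$ so that $P_n\tilde VP_n=0$. As you anticipated, it is the discriminant argument that covers every $n\ge1$.
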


\begin{proof}
By \cite[Theorem~1]{Veliev2013Isospectral}, rescaled from period $1$ to
period $\pi$ by replacing Veliev's variable with $x/\pi$ and dividing by $\pi^2$, the
potentials $ce^{-2ix}+de^{2ix}$ with a given product $cd$ all have the same
periodic and anti-periodic eigenvalues. Since $2a\cos2x=ae^{-2ix}+ae^{2ix}$,
the operators $\tilde L$ and $L$ have the same eigenvalues for either square
root $a$ of $cd$.

(a) For large $n$, Theorem~\ref{thm:main} gives two distinct eigenvalues of
$\tilde L$ in $|\lambda-n^2|<1$. The count of \cite[Lemma~1]{AnahtarciDjakov2012}
holds for every complex potential in $L^2(0,\pi)$ with zero mean, so this disc contains
exactly two eigenvalues of $\tilde L$, counted with algebraic multiplicity. As
in the proof of Theorem~\ref{thm:main}, both are algebraically simple, and
\eqref{eq:main-refined} with $a=\sqrt{cd}$ is \eqref{eq:asymmetric}, with the
same constant. If $cd>0$, take $a=\sqrt{cd}>0$: then $L$ is self-adjoint, so
the eigenvalues are real, and the last assertion of Theorem~\ref{thm:main}
gives the sign $+$. For $cd\ne0$, \eqref{eq:asymmetric} refines
\cite[(1.13)]{AnahtarciDjakov2012}.

(b) For $0\le t\le1$, let $L_ty=-y''+t(ce^{-2ix}+de^{2ix})y$. If $c=0$, the
coefficient equations of $(L_t-\lambda)y=0$ in the basis $e^{ikx}$,
$k\in n+2\mathbb Z$, read $(k^2-\lambda)y_k=-td\,y_{k-2}$. For $td\ne0$ they
give $|y_{k-2}|>|y_k|$ whenever $y_k\ne0$ and $k^2>|\lambda|+|td|$; since
$y_k\to0$ as $k\to-\infty$, a nonzero solution in $L^2(0,\pi)$ has a lowest
nonzero coefficient $y_{k_0}$,
and its equation gives $\lambda=k_0^2$; for $td=0$ this is clear. If $d=0$,
the highest nonzero coefficient plays the same role. Since $L_t$ has compact
resolvent, its spectrum therefore lies in $\{k^2:k\in n+2\mathbb Z\}$. As
$k^2-n^2\in4\mathbb Z$, the circle $|\lambda-n^2|=1$ lies in the resolvent set
of every $L_t$, and the Riesz projection of $L_t$ for this circle depends
continuously on $t$ in norm. Projections at norm distance less than $1$ have
equal rank, so its rank is constant, equal to $2$ at $t=0$. Hence $n^2$ is an
eigenvalue of $\tilde L=L_1$ of algebraic multiplicity two, and $\gamma_n=0$.
\end{proof}

\section{Conclusion}\label{sec:conclusion}

For fixed $a$, Theorem~\ref{thm:main} refines \eqref{eq:ad}: it gives the gap,
or splitting, as the leading factor times an explicit exponential, which
determines the relative corrections of orders $n^{-4}$ through $n^{-7}$, with
relative error $O(n^{-8})$ instead of $O(n^{-4})$. We keep the framework of
\cite{AnahtarciDjakov2012} and replace two $O(n^{-4})$ estimates in its
argument: the walks from $-n$ to $n$ are summed exactly, relative to the
monotone walk, as the reciprocal of a tridiagonal determinant, and both
eigenvalues are expanded about a common centre, so the diagonal entry need not
be compared at the two eigenvalues. The diagonal entry is expanded through the
closed walks of length six, using a reflection pairing, and the splitting is
assembled from the logarithms of three factors at the centre.

\appendix
\numberwithin{equation}{section}

\section{Exact finite-sum identities}\label{app:sums}

Lemma~\ref{lem:coefficients} uses the identities below, together with
$U_1=H_{n-1}/(2n)$, to compute $c_1$ and $c_2$ in
\eqref{eq:log-assembly}. We give closed forms for $Q_n(0)$, $U_2$,
$Q_n'(0)$ and the two sums defining $T_2(0)$ in \eqref{eq:T-local}.

Write $H_m^{(j)}:=\sum_{r=1}^m r^{-j}$ for integers $m\ge0$ and
$j\ge1$, with $H_m=H_m^{(1)}$ and empty sums equal to zero.
As in Section~\ref{sec:offdiag}, let $D_k=4k(n-k)$.
In Appendices~\ref{app:sums} and~\ref{app:coupling},
$q_k$ without an argument denotes
$q_k(0)=1/(D_{k-1}D_k)$.

\begin{lemma}[Exact finite-sum identities]\label{lem:exact-sums}
For every integer $n\ge2$,
\begin{align}
Q_n(0)
&=\frac{n(n-3)+2(n-1)H_{n-1}}{8n(n-1)^2(n+1)},
\label{eq:Q-exact}\\[2pt]
U_2
&=\frac{H_{n-1}}{4n^{3}}+\frac{H_{n-1}^{(2)}}{8n^{2}},
\label{eq:sumDsq}\\[2pt]
Q_n'(0)
&=-\frac{(n-3)(n+5)}{32(n-1)^{3}(n+1)^{2}}
 -\frac{(3n^{2}-1)\,H_{n-1}}{8n^{3}(n-1)^{2}(n+1)^{2}}
 -\frac{H_{n-1}^{(2)}}{16n^{2}(n-1)(n+1)},
\label{eq:Qprime-exact}\\[2pt]
\sum_{k=2}^{n-1}q_k^{2}
&=-\frac{3n^{3}-7n^{2}-7n+35}{128(n-1)^{4}(n+1)^{3}}
 +\frac{(5n^{2}-1)\,H_{n-1}}{32n^{3}(n-1)^{3}(n+1)^{3}}
 +\frac{(n^{2}+1)\,H_{n-1}^{(2)}}{64n^{2}(n-1)^{2}(n+1)^{2}},
\label{eq:qsq-exact}
\end{align}
and, for every integer $n\ge3$,
\begin{equation}
\begin{aligned}
\sum_{k=3}^{n-1}q_{k-1}q_k
&=\frac{7n^{4}-21n^{3}+7n^{2}-87n+166}
 {512(n-2)^{2}(n-1)^{3}(n+1)^{2}(n+2)}\\
&\qquad
 +\frac{(5n^{2}-2)\,H_{n-1}}{32n^{3}(n-2)(n-1)^{2}(n+1)^{2}(n+2)}
 -\frac{H_{n-1}^{(2)}}{128n^{2}(n-1)(n+1)}.
\end{aligned}
\label{eq:qadj-exact}
\end{equation}
\end{lemma}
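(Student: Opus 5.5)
The plan is to reduce every sum to partial fractions in $k$, then to sums of the form $\sum_k k^{-j}$ and $\sum_k (n-k)^{-j}$, which are harmonic numbers. Throughout, $D_k=4k(n-k)$ and $q_k=1/(D_{k-1}D_k)$. The basic identity is
\[
\frac{1}{k(n-k)}=\frac1n\Bigl(\frac1k+\frac1{n-k}\Bigr).
\]
For $U_2=\sum_{k=1}^{n-1}D_k^{-2}$ I square this identity, which gives
\[
\frac{1}{k^2(n-k)^2}=\frac1{n^2}\Bigl(\frac1{k^2}+\frac1{(n-k)^2}\Bigr)+\frac{2}{n^3}\Bigl(\frac1k+\frac1{n-k}\Bigr).
\]
By the symmetry $k\mapsto n-k$, the sum is $\frac{2}{n^2}H^{(2)}_{n-1}+\frac{4}{n^3}H_{n-1}$. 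Dividing by $16$ yields \eqref{eq:sumDsq}.

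\textbf{$Q_n(0)$.} The summand is $q_k=1/\bigl(16(k-1)(n-k+1)\,k(n-k)\bigr)$. I decompose it into simple fractions in $k$ with poles at $k=0,1,n-1,n$. The coefficients are rational in $n$ with denominators from $n$, $n-2$ and $n\pm1$. I sum each piece over $2\le k\le n-1$; the sums $\sum 1/(k-1)$ and $\sum 1/k$ are shifted harmonic sums $H_{n-2}$, $H_{n-1}-1$, and so on. I then rewrite everything via $H_{n-2}=H_{n-1}-1/(n-1)$. The reflection $q_{n+1-k}=q_k$ lets me compute only the residues at $k=0,1$ and double them. This should collapse to \eqref{eq:Q-exact}, where the spurious factor $n-2$ must cancel.

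\textbf{$Q_n'(0)$.} This equals $-\sum_k q_k\bigl(1/D_{k-1}+1/D_k\bigr)$. The summands have double poles at the same four points, so the partial fractions now contain $1/k^2$ terms, and these produce $H^{(2)}_{n-1}$. The same procedure applies to $\sum q_k^2$, with all poles double, giving \eqref{eq:qsq-exact}. It also applies to $\sum_{k=3}^{n-1} q_{k-1}q_k$, which has simple poles at $k-2,\,k,\,n-k-1,\,n-k+1$ and double poles at $k-1,\,n-k$. This explains why the factors $(n-2)$, $(n+2)$ appear in \eqref{eq:qadj-exact}, and why its range requires $n\ge3$.

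In each case the bookkeeping is as follows. I compute the residue coefficients $r_{j}(n)$ at each pole and sum $\sum_{k}(k-c)^{-j}$ over the shifted range as $H^{(j)}_{N}-{}$(boundary terms). I collect the coefficient of $H_{n-1}$ and of $H^{(2)}_{n-1}$, and the remaining rational part. The coefficients of $H_{n-1}$ and $H^{(2)}_{n-1}$ are essentially the sums of residues at the poles near $0$, doubled by reflection. So they can be checked independently against the stated numerators, such as $(5n^2-1)$ and $(n^2+1)$.

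The main obstacle is purely the rational part: the boundary corrections from shifting the harmonic sums accumulate many terms, and the rational functions must be simplified to the stated closed forms. I would handle this by verifying each identity as an equality of rational functions, after substituting $H_{n-1}=H_{n-2}+1/(n-1)$, and checking that the difference vanishes. Equivalently, for each identity I check it at $n=2$ or $n=3$. I then prove that the increment from $n$ to $n+1$ matches on both sides: the left side's increment is not a single term since the summands depend on $n$, so instead I would rely on the partial-fraction derivation plus symbolic simplification. Small-$n$ numerical checks, such as $n=2,3,4$, confirm the final constants, for example $Q_2(0)=0$, where the right side gives $(2\cdot(-1)+2\cdot1)/(\cdots)=0$.
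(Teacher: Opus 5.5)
Your plan is the paper's proof: partial fractions in the summation index, the reflection symmetry of each summand to pair the poles near $0$ with those near $n$, and term-by-term summation into shifted harmonic numbers rewritten in terms of $H_{n-1}^{(j)}$. The paper records the partial-fraction coefficients explicitly where you leave them to symbolic simplification, and your small-$n$ checks such as $Q_2(0)=0$ are only sanity checks, not part of the argument.

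One slip to fix: $q_k=[16(k-1)k(n+1-k)(n-k)]^{-1}$ has its poles at $k=0,1,n,n+1$, not at $n-1$. So only $n$ and $n\pm1$ occur in the coefficients for $Q_n(0)$, and no factor $n-2$ needs to cancel. Likewise, $q_{k-1}q_k$ has simple poles at $k=0,2,n,n+2$ and double poles at $k=1,n+1$; this is where the factors $n\pm2$ in \eqref{eq:qadj-exact} come from.
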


\begin{proof}
For fixed $n$ in the stated range, we decompose each summand into partial
fractions in the summation index. The summands are rational functions that
vanish at infinity, and their poles, as written, are distinct, so each summand
is the sum of its principal parts. At a double pole $p$, if the summand is
$g(x)(x-p)^{-2}$, the coefficients of $(x-p)^{-2}$ and $(x-p)^{-1}$ are $g(p)$
and $g'(p)=g(p)\,(\log g)'(p)$. Summing term by term gives differences of
harmonic numbers, which we express in terms of $H_{n-1}^{(j)}$ and rational
functions of $n$ using $H_{n-1-i}^{(j)}=H_{n-1}^{(j)}-\sum_{r=n-i}^{n-1}r^{-j}$.

\emph{Proof of \eqref{eq:Q-exact}.} Since
$16q_k=[(k-1)(n+1-k)k(n-k)]^{-1}$, partial fractions give
\[
q_k
=\frac{1}{16n}\left[
\frac{1}{n-1}\left(\frac{1}{k-1}+\frac{1}{n-k}\right)
-\frac{1}{n+1}\left(\frac{1}{k}+\frac{1}{n+1-k}\right)
\right].
\]
Summing over $2\le k\le n-1$ gives
$Q_n(0)=\frac{1}{8n}\bigl[\frac{H_{n-2}}{n-1}-\frac{H_{n-1}-1}{n+1}\bigr]$,
which reduces to \eqref{eq:Q-exact}.

\emph{Proof of \eqref{eq:sumDsq}.} Squaring
$\frac{1}{k(n-k)}=\frac1n\bigl(\frac1k+\frac1{n-k}\bigr)$ gives
$\frac{16}{D_k^2}=\frac{1}{n^2}\bigl(\frac{1}{k^2}+\frac{1}{(n-k)^2}\bigr)
+\frac{2}{n^3}\bigl(\frac1k+\frac1{n-k}\bigr)$, and summing over
$1\le k\le n-1$ gives \eqref{eq:sumDsq}.

\emph{Proof of \eqref{eq:Qprime-exact}.} By \eqref{eq:dk} and \eqref{eq:qk},
$q_k'(0)=-q_k(D_{k-1}^{-1}+D_k^{-1})$, and the reflection $k\mapsto n+1-k$
of $\{2,\ldots,n-1\}$ exchanges $D_{k-1}$ and $D_k$. Hence
\[
Q_n'(0)=-2\sum_{k=2}^{n-1}\frac{1}{D_{k-1}D_k^{2}}
=-\frac{1}{32}\sum_{k=2}^{n-1}\frac{1}{(k-1)(n+1-k)k^{2}(n-k)^{2}},
\]
and the summand decomposes as
\begin{align*}
&\frac{1}{n(n-1)^2}\,\frac{1}{k-1}
 +\frac{1}{n(n+1)^2}\,\frac{1}{n+1-k}
 -\frac{1}{n^2(n+1)}\,\frac{1}{k^2}
 -\frac{n^2+4n+2}{n^3(n+1)^2}\,\frac{1}{k}\\[2pt]
&\quad
 +\frac{1}{n^2(n-1)}\,\frac{1}{(n-k)^2}
 -\frac{n^2-4n+2}{n^3(n-1)^2}\,\frac{1}{n-k}.
\end{align*}
Over $2\le k\le n-1$, the terms in $1/(k-1)$ and $1/(n-k)$ each sum to
$H_{n-2}$, those in $1/k$ and $1/(n+1-k)$ to $H_{n-1}-1$, and those in $1/k^2$
and $1/(n-k)^2$ to $H_{n-1}^{(2)}-1$ and $H_{n-2}^{(2)}$; reducing gives
\eqref{eq:Qprime-exact}.

\emph{Proof of \eqref{eq:qsq-exact}.} With $m=k-1$, $1\le m\le n-2$, we have
\[
256q_k^2=[m(m+1)(n-1-m)(n-m)]^{-2}.
\]
This is invariant under
$m\mapsto n-1-m$, which exchanges the double poles $0,-1$ with $n-1,n$, so
\[
256q_k^2=\frac{\kappa_2}{m^2}+\frac{\kappa_1}{m}+\frac{\mu_2}{(m+1)^2}+\frac{\mu_1}{m+1}
+\bigl(\text{the same with } m\mapsto n-1-m\bigr),
\]
where $\kappa_2=\frac{1}{n^2(n-1)^2}$, $\kappa_1=-\frac{2(n^2-3n+1)}{n^3(n-1)^3}$,
$\mu_2=\frac{1}{n^2(n+1)^2}$ and $\mu_1=\frac{2(n^2+3n+1)}{n^3(n+1)^3}$. Both
halves have the same sum, so
\[
\sum_{k=2}^{n-1}q_k^2
=\frac{1}{128}\Bigl[\kappa_2H_{n-2}^{(2)}+\kappa_1H_{n-2}
+\mu_2\bigl(H_{n-1}^{(2)}-1\bigr)+\mu_1\bigl(H_{n-1}-1\bigr)\Bigr],
\]
which reduces to \eqref{eq:qsq-exact}.

\emph{Proof of \eqref{eq:qadj-exact}.} With $m=k-2$, $1\le m\le n-3$ (an empty
range when $n=3$), we have
$256q_{k-1}q_k=[m(m+1)^2(m+2)(n-2-m)(n-1-m)^2(n-m)]^{-1}$. This is invariant
under $m\mapsto n-2-m$, which exchanges the poles $0,-1,-2$ with $n-2,n-1,n$,
so
\[
256q_{k-1}q_k=\frac{\kappa_1}{m}+\frac{\mu_2}{(m+1)^2}+\frac{\mu_1}{m+1}+\frac{\theta_1}{m+2}
+\bigl(\text{the same with } m\mapsto n-2-m\bigr),
\]
where now $\kappa_1=\frac{1}{2n(n-2)(n-1)^2}$ and
$\theta_1=-\frac{1}{2n(n+2)(n+1)^2}$, while $\mu_2=-\frac{1}{n^2(n-1)(n+1)}$
and $\mu_1=-\frac{2(2n^2-1)}{n^3(n-1)^2(n+1)^2}$. Both halves have the same sum, so
\[
\sum_{k=3}^{n-1}q_{k-1}q_k
=\frac{1}{128}\Bigl[\kappa_1H_{n-3}+\mu_2\bigl(H_{n-2}^{(2)}-1\bigr)
+\mu_1\bigl(H_{n-2}-1\bigr)+\theta_1\bigl(H_{n-1}-\tfrac32\bigr)\Bigr],
\]
which reduces to \eqref{eq:qadj-exact}.
\end{proof}

\section{Exact coefficients and fixed-index checks}\label{app:coupling}

Lemma~\ref{lem:coefficients} completes the proof of Theorem~\ref{thm:main}
by evaluating the coefficients of \eqref{eq:log-assembly};
Proposition~\ref{prop:b2} and Remark~\ref{rem:dlmf} are checks, not used in
that proof.

\begin{lemma}[Exact coefficients]\label{lem:coefficients}
The coefficients $c_1$ and $c_2$ of \eqref{eq:log-assembly} are the rational
functions
\begin{equation}\label{eq:b2-closed}
\begin{aligned}
c_1(n)&=-\frac{n+2}{4(n^2-1)^2}\qquad(n\ge2),\\
c_2(n)&=-\frac{15n^5+76n^4-12n^3-176n^2-111n-188}{128(n^2-4)^2(n^2-1)^4}
\qquad(n\ge3),
\end{aligned}
\end{equation}
and $c_3(n)=O(n^{-8})$. In particular,
$c_1(n)=-\frac{1}{4n^3}-\frac{1}{2n^4}-\frac{1}{2n^5}-\frac1{n^6}-\frac{3}{4n^7}+O(n^{-8})$
and $c_2(n)=-\frac{15}{128n^7}+O(n^{-8})$.
\end{lemma}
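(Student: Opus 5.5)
The plan is to compute $c_r=s_r+m_r+\nu_r$ directly from the definitions \eqref{eq:s-def}, \eqref{eq:m-nu-def} and \eqref{eq:xi}, using the closed forms of Lemma~\ref{lem:exact-sums} for every sum over the path, and then to check that all harmonic numbers cancel.

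\textbf{Order $a^2$.} Here $c_1=\omega_1-\xi_1U_1+Q_n(0)$ with $\omega_1=f_n'(0)$. From \eqref{eq:A1}, $f_n'(0)$ is an explicit rational function, namely $-2/(16n^2-16)-16/(16n^2-16)^2$ simplified. With $\xi_1=1/(2(n^2-1))$ and $U_1=H_{n-1}/(2n)$, the $H_{n-1}$ term of $-\xi_1U_1$ is $-H_{n-1}/(4n(n^2-1))$. The $H_{n-1}$ term of \eqref{eq:Q-exact} is $2(n-1)H_{n-1}/(8n(n-1)^2(n+1))=H_{n-1}/(4n(n^2-1))$, so the two cancel. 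The remaining rational parts are combined over the common denominator $(n^2-1)^2$, and I expect $-(n+2)/(4(n^2-1)^2)$ to result. This is routine.

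\textbf{Order $a^4$.} Here $c_2=\omega_2+\tfrac12\omega_1^2-\xi_2U_1+\tfrac12\xi_1^2U_2+\xi_1Q_n'(0)+\tfrac12\sum q_k^2+\sum q_{k-1}q_k$, where $\omega_2=f_n''(0)\xi_1+g_n'(0)$ is obtained by differentiating \eqref{eq:A1} and \eqref{eq:A3} at $z=0$. The slope and centre pieces are rational; only $U_1$, $U_2$, $Q_n'(0)$ and the two $T_2$ sums carry $H_{n-1}$ and $H^{(2)}_{n-1}$.

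I will collect the coefficient of $H^{(2)}_{n-1}$ first. It comes from $\tfrac12\xi_1^2U_2$, $\xi_1Q_n'(0)$, $\tfrac12\sum q_k^2$ and $\sum q_{k-1}q_k$, and the four contributions are
\[
\frac{1}{64n^2(n^2-1)^2},\qquad -\frac{1}{32n^2(n^2-1)^2},\qquad \frac{n^2+1}{128n^2(n^2-1)^2},\qquad -\frac{1}{128n^2(n^2-1)}.
\]
Their sum is $(2-4+n^2+1-n^2+1)/(128n^2(n^2-1)^2)=0$. The coefficient of $H_{n-1}$ collects $-\xi_2U_1$, $\tfrac12\xi_1^2U_2$, $\xi_1Q_n'(0)$ and the two $T_2$ sums, and I expect it to vanish in the same way after clearing the denominator $n^3(n^2-1)^3(n^2-4)$. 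What remains is a sum of rational functions, which I reduce to the stated quotient with denominator $128(n^2-4)^2(n^2-1)^4$. Its large-$n$ expansion is $-15/(128n^7)+O(n^{-8})$.

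\textbf{Order $a^6$.} For $c_3$ I will not find a closed form but only an asymptotic bound, using the $n\to\infty$ expansions described after the proof of Theorem~\ref{thm:main}. I would write each ingredient to relative order $n^{-2}$ beyond its leading term:
\begin{itemize}
\item $s_3=\omega_3+\omega_1\omega_2+\tfrac13\omega_1^3$, expanded via Taylor expansion of $f_n,g_n,h_n$;
\item $m_3$, which is $O(n^{-9}\log n)$ since $\xi_3=O(n^{-10})$ and $\xi_1\xi_2U_2,\ \xi_1^3U_3=O(n^{-10})$;
\item $\xi_2Q_n'(0)$ and $\tfrac12\xi_1^2Q_n''(0)$, which are $O(n^{-8})$ because $Q_n'$ and $Q_n''$ are $O(n^{-4}\log n)$, by dominating the sums near the ends of the path with $D_k\ge 2n\min(k,n-k)$;
\item $T_3(0)$ and $\xi_1T_2'(0)$, handled by end sums.
\end{itemize}
For the end sums, $q_k(0)=[16n^2(k-1)k]^{-1}(1+O(k/n))$ near the left end, and symmetrically near the right end. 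This gives $T_3(0)=2\cdot(2/9)(16n^2)^{-3}\cdot(\dots)+O(n^{-7})$-type expansions in which the telescoping constants make the $\pi^2$ terms cancel. The leading $2/(9(16n^2)^3)$ of $T_3$ cancels $s_3$'s leading $-2/(9(16n^2)^3)$. The $n^{-7}$ terms of $T_3(0)$ and $\xi_1T_2'(0)$ should then cancel, leaving $c_3=O(n^{-8})$.

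\textbf{Main obstacle.} The hardest step is this $n^{-7}$ bookkeeping at order $a^6$. It needs the first correction in $k/n$ inside the end sums, controlled uniformly up to the midpoint. I would first try splitting every sum at $k=n/2$, expanding $1/(n-k)=n^{-1}(1+k/n+\dots)$ with an explicit remainder, and summing the resulting telescoping series exactly in $k$ to infinity, bounding the tails by $O(n^{-8})$.
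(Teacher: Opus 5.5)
Your route is the paper's. You compute $c_1$ and $c_2$ from the closed forms of Lemma~\ref{lem:exact-sums} and check that the harmonic parts cancel. Your $H^{(2)}_{n-1}$ tally $2-4+(n^2+1)+(1-n^2)=0$ is exactly the paper's. The $H_{n-1}$ tally you only expect to vanish does vanish: over $64n^3(n^2-1)^3(n^2-4)$ the numerators from $m_2$, $\xi_1Q_n'(0)$, $\tfrac12\sum q_k^2$ and $\sum q_{k-1}q_k$ are $-3n^4-17n^2+8$, $-(12n^4-52n^2+16)$, $5n^4-21n^2+4$ and $10n^4-14n^2+4$. For $c_3$ you use end sums split at $n/2$, also as the paper does.

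Two explicit ingredients are wrong as written. First, $f_n'(0)=-\frac{2}{16n^2-16}-\frac{64}{(16n^2-16)^2}=-\frac{n^2+1}{8(n^2-1)^2}$, not $-16$ in the second term. With your value the order-$a^2$ computation returns $-(4n+5)/(16(n^2-1)^2)$ instead of $c_1$. Second, $Q_n'(0)$ is not $O(n^{-4}\log n)$: by \eqref{eq:Qprime-exact} it equals $-1/(32n^3)+O(n^{-4})$. This does no harm to $\xi_2Q_n'(0)=O(n^{-9})$. However, the $\log n$ you attach to $Q_n''(0)$ only yields $\tfrac12\xi_1^2Q_n''(0)=O(n^{-8}\log n)$. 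Use instead $q_k^{(j)}(0)=(-1)^jj!\,q_k(0)\sum_{a+b=j}D_{k-1}^{-a}D_k^{-b}$ with $D_k\ge4(n-1)$. This gives $Q_n^{(j)}(0)=O(n^{-2-j})$ with no logarithm, so $\xi_1^2Q_n''(0)=O(n^{-8})$. Also, the leading term of $T_3(0)$ is $2\cdot\frac19(16n^2)^{-3}$, not $2\cdot\frac29(\cdots)$; your later value $2/(9(16n^2)^3)$ is the correct one.

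What your plan leaves unproved is the $n^{-7}$ cancellation in $c_3$, which is the real content of $c_3=O(n^{-8})$ beyond leading order. You only say it ``should'' happen. Two things are needed.

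\emph{(i) No $n^{-7}$ term in $s_3$.} This follows from parity. The map $n\mapsto-n$ exchanges $e_j^+$ and $e_j^-$, so $f_n,g_n,h_n$, and hence $\xi_r,\omega_r,s_r$, are even in $n$. The $n^{-6}$ parts of $h_n'(0)$, $\omega_1\omega_2$ and $\tfrac13\omega_1^3$ are $-23/36864$, $5/4096$ and $-1/1536$, so $s_3=-1/(18432n^6)+O(n^{-8})$.

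\emph{(ii) The $n^{-7}$ coefficients of $T_3(0)$ and $\xi_1T_2'(0)$ must be evaluated.} For $k\le n/2$ use $q_k(0)=\frac{1+(2k-1)/n+O(k^2/n^2)}{16n^2(k-1)k}$ and $q_k'(0)=-\frac{q_k(0)}{4n}\bigl(\frac1{k-1}+\frac1k\bigr)(1+O(k/n))$, together with $q_{n+1-k}=q_k$. These give $T_3(0)=2(16n^2)^{-3}(S+S'/n)+O(n^{-8})$ and $\xi_1T_2'(0)=-S''/(1024n^7)+O(n^{-8})$, where $S,S',S''$ are convergent series in $u_m=1/(m(m+1))$ and $w_m=1/m+1/(m+1)$. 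Cancellation at order $n^{-7}$ is equivalent to $S'=2S''$, and nothing in your plan predicts this identity; it has to be computed. Partial fractions give $S=1/9$, $S'=5/4$ and $S''=5/8$, with the $\pi^2$ and $\zeta(3)$ parts cancelling. The two $n^{-7}$ terms are therefore $\pm5/(8192n^7)$, and with these values your plan goes through.
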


\begin{proof}
By \eqref{eq:A1}--\eqref{eq:A5}, the quantities $\xi_r$, $\omega_r$ and
$s_r$ in \eqref{eq:xi} and \eqref{eq:s-def} are rational functions of $n$.
Each of $U_j$, $Q_n^{(j)}(0)$, $T_2^{(j)}(0)$ and $T_3(0)$ is a finite linear
combination of sums over $k$ of products of negative powers of
$D_{k-3},\dots,D_k$. As a rational function of $k$, such a product has poles
only at fixed integers and at $n$ plus fixed integers, and its partial-fraction
coefficients are rational in $n$; so summing over $k$ gives shifted harmonic
numbers, which reduce to $H_{n-1}^{(\ell)}$, $\ell\ge1$, plus rational
functions of $n$. Since
\eqref{eq:m-nu-def} is linear in these sums, each $c_r$ is a linear combination
of $1$ and the $H_{n-1}^{(\ell)}$ with coefficients rational in $n$.

\emph{The coefficient $c_1$.} By \eqref{eq:Q-exact}, the harmonic part of
$Q_n(0)$ is $H_{n-1}/[4n(n^2-1)]=\xi_1U_1$, which cancels $m_1$. Hence, for
every $n\ge2$,
\[
\begin{aligned}
c_1&=s_1+m_1+\nu_1=f_n'(0)+\bigl[Q_n(0)-\xi_1U_1\bigr]\\
&=-\frac{n^2+1}{8(n^2-1)^2}+\frac{n-3}{8(n-1)^2(n+1)}
=-\frac{n+2}{4(n^2-1)^2}.
\end{aligned}
\]

\emph{The coefficient $c_2$.} By Lemma~\ref{lem:exact-sums}, only $H_{n-1}$
and $H_{n-1}^{(2)}$ occur. Over the common denominators $128n^2(n^2-1)^2$ and
$64n^3(n^2-1)^3(n^2-4)$, the coefficients of $H_{n-1}^{(2)}$ and $H_{n-1}$
have numerators
\begin{align*}
&\underbrace{2}_{\frac12\xi_1^2U_2}
-\underbrace{4}_{\xi_1Q_n'(0)}
+\underbrace{(n^{2}+1)}_{\frac12\sum q_k^{2}}
+\underbrace{(1-n^{2})}_{\sum q_{k-1}q_k}=0,\\[2pt]
&\underbrace{(-3n^4-17n^2+8)}_{m_2}
-\underbrace{(12n^4-52n^2+16)}_{\xi_1Q_n'(0)}
+\underbrace{(5n^4-21n^2+4)}_{\frac12\sum q_k^2}
+\underbrace{(10n^4-14n^2+4)}_{\sum q_{k-1}q_k}=0.
\end{align*}
Collecting the rational parts gives the stated $c_2$, which
Remark~\ref{rem:dlmf} confirms at $n=3$. The expansions of $c_1$ and $c_2$
follow from \eqref{eq:b2-closed}.

\emph{The coefficient $c_3$.} Since \eqref{eq:A1}--\eqref{eq:A5} and \eqref{eq:xi} are invariant
under $n\mapsto-n$, $s_3$ is even in $n$; its $n^{-6}$ terms come from
$h_n'(0)$, $\omega_1\omega_2$ and $\omega_1^3/3$, namely $-23/36864$,
$5/4096$ and $-1/1536$, while $\omega_3-h_n'(0)=O(n^{-8})$; hence
$s_3=-1/(18432n^6)+O(n^{-8})$. For $2\le k\le n/2$,
\[
q_k(0)=\frac{1+(2k-1)/n+O(k^2/n^2)}{16n^2(k-1)k},\qquad
q_k'(0)=-\frac{q_k(0)}{4n}\Bigl(\frac1{k-1}+\frac1k\Bigr)\bigl(1+O(k/n)\bigr),
\]
and $q_{n+1-k}(z)=q_k(z)$. With $u_m=1/(m(m+1))$ and $w_m=1/m+1/(m+1)$,
\eqref{eq:T-local} therefore gives $T_3(0)=2(16n^2)^{-3}(S+S'/n)+O(n^{-8})$
and $\xi_1T_2'(0)=-S''/(1024n^7)+O(n^{-8})$ (split each sum at $n/2$ and use
this symmetry; the $O(k^2/n^2)$ and $O(k/n)$ corrections contribute
$O(n^{-8})$ because $\sum_{m\ge1}m^2u_m^3$ and $\sum_{m\ge1}mu_m^2w_m$
converge, and the tails of the sums beyond $n/2$ are smaller), where
\[
\begin{aligned}
S&=\sum_{m\ge1}\Bigl[\tfrac13u_m^3+u_mu_{m+1}(u_m+u_{m+1})+u_mu_{m+1}u_{m+2}\Bigr]\\
&=\frac{10-\pi^2}3+\Bigl(\frac{\pi^2}2-\frac{39}8\Bigr)+\Bigl(\frac{119}{72}-\frac{\pi^2}6\Bigr)
=\frac19
\end{aligned}
\]
and
\begin{align*}
S'&=\sum_{m\ge1}\bigl[(2m+1)u_m^3+(6m+5)u_m^2u_{m+1}+(6m+7)u_mu_{m+1}^2
+(6m+9)u_mu_{m+1}u_{m+2}\bigr]\\
&=\bigl(2\zeta(3)-2\bigr)+\Bigl(\frac{5\pi^2}4-\zeta(3)-\frac{85}8\Bigr)
+\Bigl(\frac{55}4-\frac{5\pi^2}4-\zeta(3)\Bigr)+\frac18=\frac54,\\
S''&=\sum_{m\ge1}\bigl[u_m^2w_m+u_mu_{m+1}(w_m+w_{m+1})\bigr]
=\bigl(2\zeta(3)-2\bigr)+\Bigl(\frac{21}8-2\zeta(3)\Bigr)=\frac58 .
\end{align*}
Hence $T_3(0)=1/(18432n^6)+5/(8192n^7)+O(n^{-8})$ and
$\xi_1T_2'(0)=-5/(8192n^7)+O(n^{-8})$. The remaining terms $\xi_2Q_n'(0)$,
$\frac12\xi_1^2Q_n''(0)$ and $m_3$ are $O(n^{-8})$, because
$\xi_r=O(n^{2-4r})$ by \eqref{eq:xi}, $U_j=O(n^{-j}\log n)$ and each $z$-derivative of
$q_k(z)$ gains a factor $O(n^{-1})$. Thus $c_3=O(n^{-8})$.
\end{proof}

The coefficients $b_1(n):=c_1(n)$ for $n\ge2$ and
\begin{equation}\label{eq:b2-fixed}
b_2(n):=c_2(n)+\tfrac12c_1(n)^2
=\frac{(n+4)\bigl(4n^{5}-15n^{4}-32n^{3}+12n^{2}+64n+111\bigr)}
{128\,(n-2)^{2}(n-1)^{4}(n+1)^{4}(n+2)^{2}}
\end{equation}
for $n\ge3$ are also the coefficients of $a^2$ and $a^4$ in the
fixed-index expansion of Proposition~\ref{prop:b2}. This allows a comparison
with independent small-coupling data.

\begin{proposition}[Fixed-index expansion]\label{prop:b2}
Fix an integer $n\ge2$. For small $|a|>0$, the equations
$F_n(z)=\pm\beta_n(z)$ have unique solutions $z_n^\pm(a)$ in $|z|\le\frac12$;
put $\mathcal G_n(a):=[z_n^+(a)-z_n^-(a)]/[2\sigma_0(n,0)]$. Then the disc
$|\lambda-n^2|<1$ contains exactly two eigenvalues of $L$, counted with
algebraic multiplicity, namely the simple eigenvalues $n^2+z_n^\pm(a)$;
their difference is thus $\pm2\sigma_0(n,0)\,\mathcal G_n(a)$.
As $a\to0$, $\mathcal G_2(a)=1+b_1(2)a^2+O(a^4)$ and, for $n\ge3$,
\[
\mathcal G_n(a)=1+b_1(n)\,a^{2}+b_2(n)\,a^{4}+O(a^{6}).
\]
The implied constants depend on $n$.
\end{proposition}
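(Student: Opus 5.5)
The proof fixes $n\ge2$ and runs the argument of Section~\ref{sec:proof} with $a\to0$ in place of $n\to\infty$. For fixed $n$, \eqref{eq:H} holds for small $|a|$ with $R=\tfrac12$, so the walk series converge on a neighbourhood of $|z|\le1$. Moreover, $\alpha_n$ and $\beta_n$ are jointly analytic in $(a,z)$ there, with $\alpha_n=O(a^2)$ and $\beta_n=O(|a|^n)$ uniformly on $|z|\le1$. The bound on $\beta_n$ holds because every off-diagonal walk has length at least $n$.

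\textbf{Roots and eigenvalue count.} By Cauchy's estimate, $\alpha_n\pm\beta_n$ are $\tfrac12$-contractions of $|z|\le\tfrac12$ into itself for small $|a|$. This gives unique solutions $z_n^\pm(a)$, and $z_n^0$ likewise. The proof of Proposition~\ref{prop:root} goes through verbatim with $M_n$ replaced by $|a|^n$, the order of $\sigma_0(n,0)$. It yields
\[
z_n^+-z_n^-=\frac{2\beta_n(z_n^0)}{F_n'(z_n^0)}\bigl[1+O(|a|^{2n})\bigr].
\]
Here $\beta_n/\sigma_0=1+O(a^2)$ by \eqref{eq:logdet}, whose norm condition holds for small $|a|$. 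In particular $z_n^+\ne z_n^-$, so these are two distinct eigenvalues by \eqref{eq:eig-equiv}.

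For the count, I would not use \cite[Lemma~1]{AnahtarciDjakov2012}, which is stated for large $n$. Instead I would apply the Riesz-projection continuity argument of Corollary~\ref{cor:asymmetric}(b) to the family $L_t=L_0+tV$, $0\le t\le1$. For small $|a|$, the circle $|\lambda-n^2|=1$ lies in the resolvent set of every $L_t$, because $\|tV\|\le2|a|<1$ and $L_0$ is normal with spectrum at distance at least $1$ from the circle. Hence the rank stays $2$. So the two eigenvalues exhaust the disc and are simple, and their difference is $\pm2\sigma_0(n,0)\mathcal G_n(a)$.

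\textbf{Expansion.} Since $\mathcal G_n$ is $[F_n'(z_n^0)]^{-1}\cdot\sigma_0(n,z_n^0)/\sigma_0(n,0)\cdot\beta_n(z_n^0)/\sigma_0(n,z_n^0)$ up to $1+O(a^{2n})$, and every ingredient is analytic in $a$ near $0$, it suffices to expand $\log\mathcal G_n$ in powers of $a^2$. The coefficient computations of Lemma~\ref{lem:center}, Proposition~\ref{prop:alpha-derivative} and Proposition~\ref{prop:beta} are exact Taylor matching in $a^2$ with fixed $n$. Three facts make this work:
\begin{itemize}
\item $A_{2p-1}=O(a^{2p})$;
\item $T_r=O(a^{0})$ is multiplied by $a^{2r}$;
\item $z_n^0=\xi_1a^2+\xi_2a^4+O(a^6)$.
\end{itemize}
Consequently $\log\mathcal G_n=c_1a^2+c_2a^4+O(a^6)$, with the same $c_1,c_2$ as in \eqref{eq:log-assembly}.

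The one point needing care is that these coefficients are defined only when the relevant walk classes are fully present. $A_3=a^4g_n$ and $\xi_2$ require $n\ge3$; at $n=2$ the inward length-four walk would hit $-n$. The $T_2$ sum $\sum_{k\ge3}q_{k-1}q_k$ also needs $n\ge3$. So for $n=2$ only the $a^2$ term is claimed. One must also check that $\xi_3$ and $h_n$ do not enter below order $a^6$, which holds because $h_n$ first appears at $a^6$. Exponentiating then gives $1+c_1a^2+(c_2+\tfrac12c_1^2)a^4=1+b_1a^2+b_2a^4$, using Lemma~\ref{lem:coefficients}. That lemma's exact formulas hold for every $n\ge3$ (resp.\ $n\ge2$), not just asymptotically.

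\textbf{Main obstacle.} The main obstacle is the exact simplification of $c_2+\tfrac12c_1^2$ to the closed form \eqref{eq:b2-fixed}. This is routine but heavy rational algebra, and I would verify it by computer algebra.
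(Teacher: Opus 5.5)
Your proposal is correct and follows essentially the paper's own proof. You fix $n$ and rerun Lemma~\ref{lem:center} and Propositions~\ref{prop:alpha-derivative}, \ref{prop:beta} and~\ref{prop:root} with $M_n$ replaced by $|a|^n$ and all remainders of degree at least six in $a$ (keeping only the $a^2$ term at $n=2$), and you get the eigenvalue count from the Riesz-projection rank argument of Corollary~\ref{cor:asymmetric}(b), exactly as the paper does. The differences are cosmetic: you take $R=\frac12$ where the paper takes $R=\frac34$, and the simplification of $c_2+\frac12c_1^2$ to \eqref{eq:b2-fixed} that you call the main obstacle is not needed here, since the proposition uses $b_2$ defined as that combination.
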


\begin{proof}
Fix $n$ and take $R=\frac34$. For small $|a|$, \eqref{eq:H} holds, and on
$|z|\le R$ the walk series of Section~\ref{sec:LS} give $\alpha_n=O(a^2)$
and, for $n\ge3$, $\sum_{p\ge3}A_{2p-1}=O(a^6)$, with the same bounds for their
$z$-derivatives on $|z|\le\frac12$ by Cauchy's estimate. The proof of
Proposition~\ref{lem:logdet} applies for every $n\ge2$ under \eqref{eq:H} and gives
$\log(\beta_n/\sigma_0)=a^2T_1+a^4T_2+O(a^6)$ and $|\beta_n|\asymp|a|^n$.
With these bounds in place of Proposition~\ref{prop:diagonal-tail-uniform} and
the tail estimate of Proposition~\ref{lem:logdet}, the proofs of
Lemma~\ref{lem:center}, Propositions~\ref{prop:alpha-derivative},
\ref{prop:beta} and~\ref{prop:root}, and of \eqref{eq:log-assembly}, with the
terms of order $a^6$ omitted, apply for fixed $n$ as $a\to0$, with $M_n$
replaced by $|a|^n$; each remainder there consists of terms of degree at least
six in $a$ and becomes $O(a^6)$. Separately, the factor $1+O(M_n^2)$ in
\eqref{eq:root-reduction} becomes $1+O(|a|^{2n})$. For $n=2$, where
\eqref{eq:A3} does not apply and $O(|a|^{2n})=O(a^4)$, only the $a^2$ terms
are kept and the remainders are $O(a^4)$. The centre $z_n^0$ is now the fixed point of $\alpha_n$ in
$|z|\le\frac12$, and the solutions $z_n^\pm(a)$ of $F_n=\pm\beta_n$ in
$|z|\le\frac12$ satisfy
$\log\mathcal G_2(a)=c_1(2)a^2+O(a^4)$ and, for $n\ge3$,
$\log\mathcal G_n(a)=c_1a^2+c_2a^4+O(a^6)+O(a^{2n})$. Since
$b_2=c_2+\frac12c_1^2$, this gives the stated expansions.

For $|a|<\frac12$, the circle $|\lambda-n^2|=1$ lies in the resolvent set of
$L$, because $\|2a\cos2x\|_\infty=2|a|$ is less than the distance $1$ from
the circle to the spectrum of $-d^2/dx^2$ under the same boundary conditions.
As in the proof of Corollary~\ref{cor:asymmetric}(b), the Riesz projection
for this circle has constant rank $2$, so the disc contains exactly two
eigenvalues, counted with algebraic multiplicity.
Since $\mathcal G_n(a)\to1$ as $a\to0$, the roots $z_n^\pm(a)$
are distinct for sufficiently small $|a|>0$. By \eqref{eq:eig-equiv},
$n^2+z_n^\pm(a)$ are eigenvalues, and the algebraic count above
shows that both are simple.
\end{proof}

\begin{remark}[External checks]\label{rem:dlmf}
The fixed-index, small-coupling expansions below provide independent checks
of our coefficient functions; they do not establish the uniform large-$n$
remainder in Theorem~\ref{thm:main}.
In the DLMF form $y''+(\lambda-2q\cos2x)y=0$ of Mathieu's equation (the DLMF
writes $a$ for $\lambda$), our operator corresponds to $q=a$. For small $q>0$,
the two eigenvalues identified in Proposition~\ref{prop:b2} are the
characteristic values
$a_n^{\mathrm{DLMF}}(q)>b_n^{\mathrm{DLMF}}(q)$; the superscript distinguishes
them from the coupling $a$ and the coefficients $b_k(n)$. By \cite[(28.6.4), (28.6.5)]{DLMF28.6},
$a_2^{\mathrm{DLMF}}-b_2^{\mathrm{DLMF}}=q^2/2-q^4/18+O(q^6)$;
dividing by $2\sigma_0(2,0)=q^2/2$ gives $b_1(2)=-1/9$ (periodic case). By
\cite[(28.6.6), (28.6.7)]{DLMF28.6},
\[
a_3^{\mathrm{DLMF}}-b_3^{\mathrm{DLMF}}
=\frac{q^3}{32}-\frac{5q^5}{8192}-\frac{609q^7}{52428800}+O(q^9),
\]
and dividing by $2\sigma_0(3,0)=q^3/32$ gives $b_1(3)=-5/256$ and
$b_2(3)=-609/1638400$ (anti-periodic case). These checks test the prefactor
$2\sigma_0(n,0)$, the coupling convention and the order of subtraction in
both parities, and, at $n=3$, the quartic coefficient $b_2$. For $n\ge7$,
the coefficients $\xi_1,\xi_2,\xi_3$
of \eqref{eq:xi} coincide with the common coefficients of $q^2$, $q^4$, $q^6$
in the small-coupling expansions \cite[(28.6.14)]{DLMF28.6} of
$a_n^{\mathrm{DLMF}}$ and $b_n^{\mathrm{DLMF}}$; this tests the coefficients
of the centre in Lemma~\ref{lem:center}, but not its uniform remainder.

\end{remark}

\section*{Declaration of competing interest}

The author declares that no known competing financial interests or
personal relationships could have appeared to influence the work
reported in this paper.

\section*{Funding}

This research did not receive any specific grant from funding agencies in the
public, commercial, or not-for-profit sectors.

\section*{Declaration of generative AI and AI-assisted technologies in the manuscript preparation process}

During the preparation of this work, the author used ChatGPT and Codex
(OpenAI) and Claude (Anthropic) to assist with drafting, manuscript review,
consistency checking, revision and writing verification scripts. The author
subsequently reviewed and edited the material as needed and takes full
responsibility for the content of the publication.

\bibliographystyle{plain}
\bibliography{mathieu_gap_refinement}

\end{document}